\documentclass[a4paper,reqno,11pt,oneside]{amsart}
\usepackage{graphicx} 
\usepackage{amsmath}
\usepackage{amsthm}
\usepackage{amssymb}
\usepackage{color}
\usepackage{xcolor}
\usepackage{bbm}
\theoremstyle{plain}
\newtheorem{thm}{Theorem}
  \theoremstyle{plain}
  \newtheorem{lem}[thm]{Lemma}
  \theoremstyle{plain}
  \newtheorem{cor}[thm]{Corollary}
   \newtheorem{prop}[thm]{Proposition}
  \theoremstyle{remark}
  \newtheorem{rem}[thm]{Remark}
  \newtheorem*{rem*}{Remark}
\def\div{\textup{div}}

\begin{document}
\title[Simplicity of eigenvalues]{Simplicity of eigenvalues for elliptic problems with mixed  Steklov-Robin boundary condition}

\author{Marco Ghimenti}
\address[Marco Ghimenti]{Dipartimento di Matematica
Universit\`a di Pisa
Largo Bruno Pontecorvo 5, I - 56127 Pisa, Italy}
\email{marco.ghimenti@unipi.it }

\author{Anna Maria Micheletti}
\address[Anna Maria Micheletti]{Dipartimento di Matematica
Universit\`a di Pisa
Largo Bruno Pontecorvo 5, I - 56127 Pisa, Italy}
\email{a.micheletti@dma.unipi.it }

\author{Angela Pistoia}
\address[Angela Pistoia] {Dipartimento SBAI, Universit\`{a} di Roma ``La Sapienza", via Antonio Scarpa 16, 00161 Roma, Italy}
\email{angela.pistoia@uniroma1.it}

\thanks{The first and the third authors are partiallty supported by INdAM - GNAMPA Project "Esistenza e propriet\`a qualitative per problemi di tipo Lane-Emden attraverso metodi asintotici" CUP E5324001950001}

\begin{abstract}  
    This paper investigates the spectral properties of two classes of elliptic problems characterized by mixed Steklov-Robin boundary conditions.  Our main objective is to prove that, for a generic domain, all the eigenvalues are simple. This result is established by employing domain perturbation techniques and analyzing the transversality of the associated operators.
\end{abstract}

\keywords{ Eigenvalues problem, Steklov-Robin-Neumann boundary conditions, generic properties, simplicity}

\subjclass{35J60, 35P05}
 \maketitle

\section{Introduction}

In the study of Partial Differential Equations (PDEs) —particularly the Helmholtz equation $\Delta u + \lambda u = 0$— the multiplicity of eigenvalues is deeply tied to the symmetry of the domain. An eigenvalue is said to be simple when, for a specific energy level or frequency, the system can take only one unique shape (eigenfunction), up to a scaling factor.The simplicity of eigenvalues is fundamental because it ensures that the qualitative, quantitative, and perturbative analysis of PDEs is stable, interpretable, and physically consistent. For this reason, proving the genericity of simple eigenvalues —i.e., that almost all domain shapes result in a simple spectrum— is a cornerstone of mathematical physics and engineering.\\

In this paper, we are interested in studying two specific eigenvalue problems:
\begin{equation}\label{Slosh-down}  
   \left\{
    \begin{array}{cc}
         -\Delta u=0& \text{ on } \Omega\\
        \partial_\nu u=\lambda u & \text{ in } S\\
        \partial_\nu u +u=0 & \text{ in }  W
    \end{array}
    \right.
\end{equation}
and 
\begin{equation}\label{Slosh-up}  
   \left\{
    \begin{array}{cc}
         -\Delta u+u=0& \text{ on } \Omega\\
        \partial_\nu u=\lambda u & \text{ in } S\\
        \partial_\nu u=0 & \text{ in }  W.
    \end{array}
    \right.
\end{equation}
Here $\Omega$ is an open bounded domain in $\mathbb{R}^n$, whose boundary $\partial \Omega $ is $C^{1,1}$ and it is composed of two connected parts $S$ and $W$, with non empty internal parts  $S^{\mathrm{o}}$ and $W^{\mathrm{o}}$ with  $S^{\mathrm{o}}\cap W^{\mathrm{o}}=\emptyset$. 
\\

Problem \eqref{Slosh-down} is a mixed Steklov-Robin eigenvalue problem for the Laplace operator, where harmonic functions are subject to Steklov conditions on $S$ and Robin conditions on $W$. This generalizes the classical Steklov problem, where the condition $\partial_\nu u = \lambda u$ is imposed on the entire boundary.\\
Problem \eqref{Slosh-up} is more established in the literature and fits naturally into the theory of Steklov-type problems for elliptic operators with mixed boundary conditions. Since the operator $-\Delta + \mathtt{I}$ with Neumann conditions on $W$ is coercive, this "sloshing-type" problem with a mass term represents a significant simplification compared to the classical sloshing problem
\begin{equation}\label{slosh}
   \left\{    \begin{array}{cc}
         -\Delta u =0& \text{ on } \Omega\\
        \partial_\nu u=\lambda u & \text{ in } S\\
        \partial_\nu u=0 & \text{ in }  W.
    \end{array}\right.
\end{equation}

There is an extensive literature on the spectral geometry of these problems (see, e.g., \cite{bkps,gp1,gp2,kkk,lpps}), providing variational characterizations, asymptotics, eigenvalue inequalities, and spectral monotonicity results.\\

In the present paper, we show that all eigenvalues of \eqref{Slosh-down} and \eqref{Slosh-up} are simple for generic domains $\Omega$. More precisely, we prove that for any such domain, there exists an arbitrarily small perturbation for which all eigenvalues $\lambda$ become simple. We also investigate whether such perturbations can leave one of the boundary components ($W$ or $S$) unchanged. Furthermore, we extend these results to versions of the above problems with non-constant coefficients, showing that simplicity is preserved under small perturbations of the potential. Finally, we consider an anisotropic elliptic equation $\partial_i(a_{ij}(x)\partial_j u)=0$ and prove that all eigenvalues are simple under generic perturbations of the coefficients $a_{ij}$.\\
To unify the notation, we define $d=\mathbbm{1}_S$, allowing the boundary conditions in \eqref{Slosh-down} and \eqref{Slosh-up} to be rewritten as $\partial_\nu u + u = \lambda d u$ (up to a shift $\lambda \to \lambda+1$) and $\partial_\nu u = \lambda d u$, respectively. The inclusion of the linear term is motivated by our proof technique and is discussed further in the context of the variational structure. This approach significantly simplifies the analysis compared to the zero-mass case with pure Neumann conditions on $W$; a dedicated study of the sloshing problem \eqref{slosh} is currently in preparation.

Regarding domain perturbations, we consider maps $\psi \in \mathcal{D} := \{\psi \in C^2(\mathbb{R}^n, \mathbb{R}^n) : \|\psi\|_{C^2} < 1/2\}$, defining the perturbed domain as $\Omega_\psi := (I+\psi)\Omega$, where $I$ denotes the identity map on $\mathbb{R}^n$. Furthermore, for $\xi \in \partial \Omega_\psi$, we define $d_\psi(\xi) := d(x)$, where $\xi = (I+\psi)(x)$."

Our main results concerning domain perturbations are as follows.

\begin{thm}\label{thm:main-simp}
For any $\varepsilon >0$, there exists $\psi \in \mathcal{D}$, with 
$\|\psi\|_{C^2}<\varepsilon$, for which  all the eigenvalues of  the problem
\begin{equation}\label{mainpsisimp}
   \left\{
    \begin{array}{cc}
         -\Delta u=0& \text{ on } \Omega_\psi\\
        \partial_\nu u +u=\lambda d_\psi(\xi) u & \text{ in } \partial\Omega_\psi\\
    \end{array}
    \right.
\end{equation} 
are simple. Also, $\psi$ can be chosen to leave one of the two components of the boundary, $S$ or $W$, unchanged.

\end{thm}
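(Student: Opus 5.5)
The plan is to follow the classical Micheletti/Uhlenbeck transversality scheme for generic simplicity. I will first set up the problem variationally and pull it back to the fixed domain, then show that a multiple eigenvalue can always be split by a small perturbation, and finally count.

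\textbf{Step 1: variational setup.} For fixed $\psi$, the problem \eqref{mainpsisimp} is equivalent to finding $u\in H^1(\Omega_\psi)$ such that
\[
\int_{\Omega_\psi}\nabla u\cdot\nabla v+\int_{\partial\Omega_\psi}uv=\lambda\int_{\partial\Omega_\psi}d_\psi uv \qquad\text{for all } v\in H^1(\Omega_\psi).
\]
The left-hand side is coercive on $H^1$; this is exactly why the Robin term is included. Hence the eigenvalues are the reciprocals of the eigenvalues of a compact self-adjoint operator $T_\psi$. Composing with $I+\psi$, I transport everything to $H^1(\Omega)$. This gives a family of quadratic forms $a_\psi$ and $b_\psi$ on the fixed space $H^1(\Omega)$, depending smoothly on $\psi\in\mathcal D$. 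The eigenvalues are then characterized by min-max and depend continuously on $\psi$.

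\textbf{Step 2: local result.} Let $\lambda$ be an eigenvalue of multiplicity $m\ge2$ for $\Omega_{\psi_0}$; by composing perturbations I may take $\psi_0=0$. Let $u_1,\dots,u_m$ be an orthonormal basis of the eigenspace with respect to $b$. For a direction $V$ (a $C^2$ vector field), standard perturbation theory for analytic families gives the following: the $m$ eigenvalues near $\lambda$ of $\Omega_{tV}$ have derivatives at $t=0$ equal to the eigenvalues of the symmetric matrix
\[
M(V)_{ij}=\frac{d}{dt}\Big|_{t=0}\big(a_{tV}-\lambda b_{tV}\big)(u_i,u_j).
\]
If some $V$ makes $M(V)$ not a multiple of the identity, then $\lambda$ splits for small $t$. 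Computing $M(V)$ is a Hadamard-type calculation that uses the regularity of the $u_i$ (they are $C^{1,1}$-ish near smooth boundary parts, by elliptic regularity). For $V$ supported near a point of $S^{\mathrm o}$ with normal component $V\cdot\nu$, one should obtain
\[
M(V)_{ij}=\int_{\partial\Omega}(V\cdot\nu)\Big(\nabla_\tau u_i\cdot\nabla_\tau u_j-\partial_\nu u_i\partial_\nu u_j+H\,(1-\lambda d)u_iu_j+\partial_\nu\big((1-\lambda d)u_iu_j\big)\Big)\,d\sigma .
\]
Here $H$ is the mean curvature, and the boundary condition lets one rewrite $\partial_\nu u_i=(\lambda d-1)u_i$. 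Since $d$ is constant on each of $S^{\mathrm o}$ and $W^{\mathrm o}$, the same formula holds separately with $V$ supported near $S$ alone or near $W$ alone. This is what gives the last claim of the theorem.

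\textbf{Step 3: the main obstacle.} The hard step is to show that $M(V)=c\,\mathrm{Id}$ for all such $V$ is impossible. Suppose it holds. Then, since $V\cdot\nu$ is arbitrary on an open piece $\Gamma$ of $S$ (or of $W$), the integrand satisfies pointwise on $\Gamma$
\[
F(u_i,u_j)=c\,\delta_{ij}.
\]
In particular, $F(u_1,u_1)-F(u_2,u_2)=0$ and $F(u_1,u_2)=0$. Replacing $u_1,u_2$ by the rotated pair $(u_1\pm u_2)/\sqrt2$ gives quadratic identities in $(u_1,u_2)$ along $\Gamma$. From these identities I intend to derive $|\nabla u_1|^2=|\nabla u_2|^2$ and $\nabla u_1\cdot\nabla u_2=0$ on $\Gamma$, together with Cauchy data relations. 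If the derivation works, a unique continuation argument finishes this step: harmonicity forces $u_1+iu_2$, restricted to suitable planes, to behave conformally. The quickest route I will try first is to integrate the trace identity $F(u_1,u_1)=c$ against the boundary and use Rellich–Pohozaev identities to reach a contradiction; alternatively I will choose a point of $\Gamma$ where $u_1=0\ne u_2$ and examine the Cauchy data there. I expect to need unique continuation from Cauchy data on an open boundary piece. Thus some $V$ gives a non-scalar $M(V)$, and the multiplicity strictly decreases under an arbitrarily small perturbation.

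\textbf{Step 4: globalization.} Define $A_k=\{\psi\in\mathcal D:\ \lambda_1,\dots,\lambda_k \text{ are simple}\}$. By continuity of eigenvalues, $A_k$ is open. By iterating Step 2 finitely many times (multiplicities are finite), $A_k$ is dense. Baire's theorem in $\mathcal D$, or simply an inductive choice with $\|\psi_k-\psi_{k-1}\|<\varepsilon 2^{-k}$ keeping earlier eigenvalues simple by openness, then yields $\psi\in\bigcap_k A_k$ with $\|\psi\|_{C^2}<\varepsilon$. Because every perturbation used is supported near $S$ (respectively near $W$), the other boundary component is left unchanged.
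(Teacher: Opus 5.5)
\textbf{The gap is Step 3.} Your Steps 1, 2 and 4 follow essentially the paper's scheme (pull-back to $H^1(\Omega)$, first-order splitting matrix, iterative globalization). Step 3, however, carries the whole content of the theorem, and you do not prove it: you list routes, and none of them works as stated.

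First, your density is off. The first variation is $M(V)_{ij}=\int_{\partial\Omega}(V\cdot\nu)\,[\nabla u_i\cdot\nabla u_j+\partial_\nu((1-\lambda d)u_iu_j)+H(1-\lambda d)u_iu_j]\,d\sigma$. Since $\partial_\nu((1-\lambda d)u_iu_j)=-2\,\partial_\nu u_i\,\partial_\nu u_j$, the bracket equals $\nabla_\tau u_i\cdot\nabla_\tau u_j+\kappa\,u_iu_j$, with $\kappa=-(\lambda-1)(\lambda-1+H)$ on $S$ and $\kappa=H-1$ on $W$. You wrote this converted form and also kept the term $\partial_\nu((1-\lambda d)u_iu_j)$, so the normal term is counted twice.

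Second, $M(V)=c(V)\,\mathrm{Id}$ with $c(V)$ linear in $V$. What you actually get on $\Gamma$ is $\nabla_\tau u_i\cdot\nabla_\tau u_j+\kappa\,u_iu_j=\delta_{ij}\Phi$ for some \emph{function} $\Phi$, not a constant. The Rellich--Pohozaev route built on ``$F(u_1,u_1)=c$'' therefore starts from a false premise.

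Third, this is a single identity per pair, in which the tangential-gradient term and the zero-order term are coupled. Normal velocities cannot separate them. Rotating to $(u_1\pm u_2)/\sqrt2$ gives nothing new, because the condition is invariant under orthogonal changes of basis. So $|\nabla u_1|=|\nabla u_2|$ and $\nabla u_1\cdot\nabla u_2=0$ do not follow. Even if they did, they would not be contradictory: $\mathrm{Re}\,z^k$ and $\mathrm{Im}\,z^k$ satisfy them everywhere, which is exactly the ``conformal'' behaviour you mention. Likewise, Cauchy data at a single point where $u_1=0\neq u_2$ cannot by itself give a contradiction.

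What is missing is an argument excluding the identity above on an open boundary piece. In dimension two something can be done. Set $w=u_1+iu_2$; it satisfies $(\partial_s w)^2+\kappa w^2=0$ along the arc. On an arc where $\kappa<0$, the argument of $w$ is then locally constant, so a nontrivial combination of $u_1,u_2$ has zero Cauchy data on a subarc, and unique continuation concludes. This does not settle the case $\kappa>0$, nor $n\ge 3$.

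\textbf{The paper's route does not fill the gap.} The paper closes this step differently. It treats $\psi|_{\partial\Omega}$ and $\sum_r\partial_\nu\psi_r\nu_r$ as independent controls and obtains separately $\nabla e_r\cdot\nabla e_s\propto\delta_{rs}$ and $e_re_s\propto\delta_{rs}$ on $S$ (or $W$). Hence $e_r=\partial_\nu e_r=0$ there, and unique continuation finishes. That route is not available to you.

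Your structural claim that $M(V)$ depends only on $V\cdot\nu$ is the correct one. The $\partial_\nu\psi$-term in the paper comes from writing $\int_S\psi_i\partial_i(e_re_s)\,d\sigma=-\int_S e_re_s\,\div\psi\,d\sigma$, which is false on a hypersurface. For $\psi$ supported near a point of $S^{\mathrm{o}}$, the correct identity is
$\int_S\psi\cdot\nabla f\,d\sigma=-\int_S f\,\mathrm{div}_\Gamma\psi\,d\sigma+\int_S(\psi\cdot\nu)(\partial_\nu f+Hf)\,d\sigma$.
Here $\mathrm{div}_\Gamma\psi=\div\psi-\sum_r\partial_\nu\psi_r\nu_r$ depends only on $\psi|_{\partial\Omega}$. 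With this identity the $\mathrm{div}_\Gamma$-terms cancel and only $\psi\cdot\nu$ survives, yielding (up to a constant factor) the corrected density above.

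As a check, if $\psi=0$ on $\partial\Omega$ then $\Omega_\psi=\Omega$ and no eigenvalue can split. Yet for such $\psi$ the paper's reduced condition reads $\frac{\lambda-1}{\lambda}\int_S e_re_s\sum_r\partial_\nu\psi_r\nu_r\,d\sigma=C\delta_{rs}$, which the paper itself shows fails for suitable $\partial_\nu\psi$.

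What one is really left with is a condition of the same type as the one the paper, in the remark ending the proof of Theorem~\ref{thm:2-simp}, says it cannot exclude. So the gap in your Step 3 is genuine, and the paper's argument does not fill it either.
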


\begin{thm}
For any $\varepsilon >0$, there exists $\psi \in \mathcal{D}$, with $\|\psi\|<\varepsilon$,  for which all the eigenvalues of  the problem
\begin{equation}\label{2psisimp}
   \left\{
    \begin{array}{cc}
         -\Delta u +u=0& \text{ on } \Omega_\psi\\
        \partial_\nu u=\lambda d_\psi(\xi) u & \text{ in } \partial\Omega_\psi\\
    \end{array}
    \right.
\end{equation} 
are simple. Also, $\psi$ can be chosen to leave $W$ unchanged.
\label{thm:2-simp}
\end{thm}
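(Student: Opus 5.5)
We follow the Micheletti/Uhlenbeck transversality scheme, as we would for Theorem \ref{thm:main-simp}. The problem \eqref{2psisimp} is first pulled back to the fixed domain $\Omega$ via $I+\psi$. It becomes the generalized eigenvalue problem
\[
\int_\Omega \bigl(A_\psi\nabla u\cdot\nabla v + J_\psi uv\bigr)\,dx=\lambda\int_{S}uv\,\sigma_\psi\,d\sigma ,
\]
where $A_\psi$, $J_\psi$, $\sigma_\psi$ are the metric coefficient, the Jacobian and the surface Jacobian, all $C^1$ in $\psi$. The left side is a scalar product on $H^1(\Omega)$, and it is coercive because of the mass term. So the problem is equivalent to $u=\lambda T_\psi u$, where $T_\psi$ is a compact, self-adjoint, nonnegative operator. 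It is defined on $H^1(\Omega)$, or on the trace space $L^2(S)$ after composing with the harmonic-type extension. Its nonzero eigenvalues are $1/\lambda_k(\psi)$, and the dependence $\psi\mapsto T_\psi$ is $C^1$.

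Next comes a local reduction. Suppose $\lambda$ is an eigenvalue of multiplicity $m>1$ for some $\psi_0$, with orthonormal eigenfunctions $u_1,\dots,u_m$. By Lyapunov--Schmidt reduction, or equivalently by the standard perturbation theory for a multiple eigenvalue, the eigenvalues of $T_{\psi_0+\psi}$ near $1/\lambda$ are given to first order by the $m\times m$ symmetric matrix $\bigl(\langle T'(\psi_0)[\psi]u_i,u_j\rangle\bigr)_{ij}$. We then invoke the abstract transversality lemma (Teytel/Micheletti type). If the linear map $\psi\mapsto \bigl(\langle T'[\psi]u_i,u_j\rangle\bigr)$ is not of the form $c(\psi)\,\mathrm{Id}$, the set of $\psi$ for which multiplicity $m$ persists is a submanifold of positive codimension. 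A countable Baire/density argument over all eigenvalues then gives density of the domains with all eigenvalues simple. Hence, arguing by contradiction, it is enough to show: if for every admissible $\psi$ the matrix is a multiple of the identity, we reach an absurdity.

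The key step is computing the shape derivative with a Hadamard formula. We take $\psi$ supported near $S$, so that $W$ stays fixed; only normal displacements $V=\psi\cdot\nu$ matter. For eigenfunctions $u_i$ of $-\Delta u+u=0$, $\partial_\nu u=\lambda u$ on $S$, the derivative of the quadratic form gives a boundary integral of the form
\[
\int_S \Bigl(\nabla_\tau u_i\cdot\nabla_\tau u_j + u_iu_j-\lambda^2u_iu_j - \lambda H u_iu_j\Bigr)V\,d\sigma ,
\]
with $H$ the mean curvature. This comes from $\partial_\nu u=\lambda u$, so that $|\nabla u|^2=|\nabla_\tau u|^2+\lambda^2u^2$; the curvature term comes from differentiating the weight $d\,\sigma$. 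The derivation of this formula, and its justification when the $u_i$ are only $H^2$ near the interface $\partial S\cap\partial W$, is the technical part; we avoid the interface by taking $V$ compactly supported in $S^{\mathrm o}$. If the matrix is a multiple of the identity for every such $V$, then pointwise on $S^{\mathrm o}$
\[
\nabla_\tau u_1\cdot\nabla_\tau u_2+(1-\lambda^2-\lambda H)u_1u_2=0,
\]
\[
|\nabla_\tau u_1|^2+(1-\lambda^2-\lambda H)u_1^2=|\nabla_\tau u_2|^2+(1-\lambda^2-\lambda H)u_2^2 .
\]

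The main obstacle is extracting a contradiction from these identities. Our first attempt uses complexification: set $w=u_1+iu_2$. The two relations say exactly that $\nabla_\tau w\cdot\nabla_\tau w+(1-\lambda^2-\lambda H)w^2=0$ on $S^{\mathrm o}$. Combined with the fact that $u_1,u_2$ have the same Cauchy data structure on $S$ ($\partial_\nu u_k=\lambda u_k$), we aim to propagate this information. Near a point of $S^{\mathrm o}$ where $w\ne0$, the equation forces $\nabla_\tau(\log w)$ to be determined up to sign by a first-order eikonal-type relation. Feeding this into the boundary equation $\Delta_S u+\dots=-\partial_\nu^2u-(n-1)H\partial_\nu u$ should force $u_2=cu_1$ on an open subset of $S$. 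Then unique continuation for Cauchy data of $-\Delta u+u=0$ gives $u_2=cu_1$ in $\Omega$, contradicting orthogonality. If this direct route fails, we fall back on a second family of perturbations, e.g.\ dilation-type fields or perturbations whose effect on $H$ can be decoupled. These make the coefficient $1-\lambda^2-\lambda H$ effectively arbitrary, so that the two relations split into $u_1u_2=0$, $u_1^2=u_2^2$ and $\nabla_\tau u_1\cdot\nabla_\tau u_2=0$, $|\nabla_\tau u_1|=|\nabla_\tau u_2|$. The first pair yields $u_1=u_2=0$ on an open part of $S$, hence zero Cauchy data, hence $u_1\equiv u_2\equiv0$, a contradiction.
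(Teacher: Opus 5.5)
Your set-up and your first variation are right. With $k:=1-\lambda^2-\lambda H$ and $F_{rs}:=\nabla_\tau u_r\cdot\nabla_\tau u_s+k\,u_ru_s$, persistence of the multiplicity under all $V=\psi\cdot\nu\in C^\infty_c(S^{\mathrm o})$ gives exactly $F_{12}=0$ and $F_{11}=F_{22}$ on $S^{\mathrm o}$.

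The gap is the step you flag yourself: the proposal never derives a contradiction from these identities. The complex relation $\nabla_\tau w\cdot\nabla_\tau w=-k\,w^2$ is a single scalar equation, so for $n\ge 3$ it does not determine $\nabla_\tau\log w$ up to sign. For $n=2$ it gives $\partial_s w=\pm\sqrt{-k}\,w$. This forces $u_2=c\,u_1$ on an arc only where $k\le 0$. Where $k>0$ you only get $w=Ce^{\pm i\theta(s)}$ with $\theta'=\sqrt{k}$, i.e.\ $u_1,u_2$ behaving like $\cos\theta,\sin\theta$, and nothing local rules out such Cauchy data. A contradiction therefore needs global information, such as the behaviour on $W$ or at the interface. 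Your identities have the same structure as the condition $\nabla_W e_r\cdot\nabla_W e_s=-e_re_s$ that the paper's final Remark admits it cannot exploit on $W$.

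The fallback does not work either. By the Hadamard structure theorem the first-order matrix depends on $\psi$ only through $V$, and you already use every $V\in C^\infty_c(S^{\mathrm o})$. The coefficient $k$ is fixed by $\Omega$ and $\lambda$. No other first-order perturbation makes $k$ ``effectively arbitrary'', and dilations are not symmetries here because of the mass term.

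The paper closes this step with exactly the kind of decoupling your fallback hopes for, and that route is not available. It keeps $G(e_r,e_s)=\int_S e_re_s(\div\psi-\sum_r\partial_\nu\psi_r\nu_r)\,d\sigma$ and rewrites $\lambda\int_S\psi_i\partial_i(e_re_s)\,d\sigma$ as $-\lambda\int_S e_re_s\div\psi\,d\sigma$. It is then left with $\int_S e_re_s\sum_r\partial_\nu\psi_r\nu_r\,d\sigma$, which it treats as an arbitrary function on $S$ independent of $\psi|_S$. From this it concludes $e_re_s=0$ and $e_r^2=e_s^2$, hence $e_r=\partial_\nu e_r=0$ on $S$.

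This step fails. The quantity $\div\psi-\sum_r\partial_\nu\psi_r\nu_r$ is the tangential divergence $\div_{\partial\Omega}\psi$, which depends only on $\psi|_{\partial\Omega}$. For $f=e_re_s$ and $\psi$ supported away from $\partial S$, the correct surface integration by parts is $\int_S\psi\cdot\nabla f\,d\sigma=-\int_S f\,\div_{\partial\Omega}\psi\,d\sigma+\int_S(\psi\cdot\nu)(\partial_\nu f+Hf)\,d\sigma$, not $-\int_S f\,\div\psi\,d\sigma$. A direct check: a $\psi$ vanishing on $\partial\Omega$ gives $\Omega_\psi=\Omega$ and cannot split anything, yet it makes the paper's leftover term nonzero. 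Redone correctly, the computation reproduces exactly your kernel $F_{rs}$.

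So the missing idea is an argument excluding $F_{12}=0$, $F_{11}=F_{22}$ on $S^{\mathrm o}$ for an orthonormal pair of eigenfunctions. Neither your sketch nor the paper's shortcut supplies it.
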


Regarding perturbations of the coefficients, we first consider a positive potential $a(x)$ and study the following problems:     
\begin{equation}\label{pot-down}  
   \left\{
    \begin{array}{cc}
         -\Delta u=0& \text{ on } \Omega\\
         \partial_\nu u+ au=d\lambda u & \text{ in } \partial \Omega
    \end{array}.
    \right.
\end{equation}
where $a\in L^{n-1}(\partial\Omega)$,
  and 
  \begin{equation}\label{pot-up}  
   \left\{
    \begin{array}{cc}
         -\Delta u+a(x)u=0& \text{ on } \Omega\\
        \partial_\nu u=d\lambda u & \text{ in } \partial \Omega
    \end{array},
    \right.
\end{equation}
for $a\in L^{n/2}(\Omega)$,
obtaining the following result

\begin{thm}
\label{thm:pot}
For any $\varepsilon>0$ there exists $b\in C^{0}(\Omega)$, with $\|b\|_{C^0}<\varepsilon$, 
  such that all the eigenvalues of  the problem  (\ref{pot-down}) or (\ref{pot-up}), with coefficient $\tilde a=a+b$ are simple.
\end{thm}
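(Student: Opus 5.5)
We follow the generic-simplicity scheme of Micheletti and Uhlenbeck, using a transversality (Sard--Smale type) argument in the perturbation parameter $b$. Fix the problem, say \eqref{pot-up}; the case \eqref{pot-down} is identical with the boundary form replacing the interior one.

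\textbf{Step 1: operator setting.} Set $H=H^1(\Omega)$ with the form
\[
\mathcal{B}_{\tilde a}(u,v)=\int_\Omega \nabla u\nabla v+\tilde a uv .
\]
For \eqref{pot-down} the form is instead $\int_\Omega\nabla u\nabla v+\int_{\partial\Omega}\tilde a uv$. For $b$ small, $\mathcal{B}_{\tilde a}$ stays coercive on $H$; we assume $a$ is positive as in the statement, and coercivity is an open condition under $L^{n/2}$, respectively $L^{n-1}$, perturbations by the Sobolev and trace embeddings. The eigenvalue problem becomes $u=\lambda T_{\tilde a}u$, where $T_{\tilde a}:H\to H$ is defined by
\[
\mathcal{B}_{\tilde a}(T_{\tilde a}u,v)=\int_S uv .
\]
This operator is compact and self-adjoint with respect to $\mathcal{B}_{\tilde a}$, because the trace $H^1(\Omega)\to L^2(S)$ is compact. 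Its nonzero eigenvalues are $1/\lambda_k$, each of finite multiplicity. The eigenvalues depend continuously on $b\in C^0$, and the set of $b$ for which the first $N$ eigenvalues are simple is open. So by a Baire argument over $N$ it suffices to show this set is dense.

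\textbf{Step 2: splitting a multiple eigenvalue.} Suppose $\lambda$ has multiplicity $m\ge2$ for $\tilde a$, with $\mathcal{B}$-orthonormal eigenfunctions $u_1,\dots,u_m$. Standard perturbation theory for the family $T_{\tilde a+tb}$ (Kato, or the Micheletti lemma) gives the following. The $m$ eigenvalues near $\lambda$ of $\tilde a+tb$ split to first order in $t$ according to the eigenvalues of the symmetric $m\times m$ matrix
\[
M_{ij}(b)=-\lambda\int_\Omega b\,u_iu_j ,
\]
and for \eqref{pot-down} the integral is $\int_{\partial\Omega}b\,u_iu_j$. The derivative is obtained by differentiating $\mathcal{B}_{\tilde a+tb}(u,v)=\lambda\int_S uv$. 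Hence, if $M(b)$ is not a multiple of the identity for some $b$, a small perturbation strictly reduces the multiplicity. Iterating finitely many times gives density for the first $N$ eigenvalues.

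\textbf{Step 3: the key obstruction.} This step is the one we expect to be the main obstacle. We must show that the map $b\mapsto M(b)$ does not have image contained in $\mathbb{R}\,\mathrm{Id}$. If it did, then for all $b\in C^0$ we would have
\[
\int b\,u_1u_2=0 \quad\text{and}\quad \int b(u_1^2-u_2^2)=0 .
\]
This forces $u_1u_2\equiv0$ and $u_1^2\equiv u_2^2$, on $\Omega$ for \eqref{pot-up} or on $\partial\Omega$ for \eqref{pot-down}. Together these give $u_1\equiv u_2\equiv0$ on that set.

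In case \eqref{pot-up}, this contradicts $u_1\neq0$ immediately. In case \eqref{pot-down}, we get $u_1=0$ on $\partial\Omega$, hence $\partial_\nu u_1=\lambda d u_1-\tilde a u_1=0$ on $\partial\Omega$. Both Cauchy data of the harmonic function $u_1$ then vanish, and unique continuation (Holmgren, since $u_1$ is harmonic) gives $u_1\equiv0$, a contradiction.

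One technical care is needed. If the perturbations are required to live in $C^0(\Omega)$, then for \eqref{pot-down} we interpret $b$ through its boundary values, or take $b\in C^0(\overline\Omega)$. Also, for \eqref{pot-up} we need $u_iu_j\in L^1$ to be tested against continuous functions, which is fine since $u_i\in H^1$.
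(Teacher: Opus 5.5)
Your proposal is correct and is essentially the paper's argument. It uses the same solution operator for the pencil and the same first-order no-splitting matrix; the paper writes it as $-\mu\int b\,e_ie_j$. Your factor $-\lambda$ should be $+\lambda$ for $\mathcal{B}$-normalized eigenfunctions, but this slip is harmless since only the ``multiple of the identity'' test matters. The contradiction is also the same: $e_1e_2=0$ and $e_1^2=e_2^2$, then the boundary condition and unique continuation.

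The only difference is the globalization step. You use openness of $\{\lambda_1<\dots<\lambda_{N+1}\}$ together with Baire category over $N$. The paper instead iterates countably many perturbations with $\|b_l\|_{C^0}<\varepsilon/2^l$ and sums them. Your version makes explicit the openness that the paper's summation tacitly needs, so that later perturbations do not destroy earlier simplicity.
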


Finally we address   an anisotropic elliptic operator in divergence form $Lu:=\partial_i (a_{ij}(x)\partial_j u)$ such that $a_{ij} \in C^1(\Omega)$ and $A(x)=(a_{ij})_{ij}$ is uniformly positive definite. We consider

\begin{equation}\label{div-down}  
   \left\{
    \begin{array}{cc}
         L u=0& \text{ on } \Omega\\
         \partial_\nu u+ u=d\lambda u & \text{ in } \partial \Omega
    \end{array}.
    \right.
\end{equation}
and we obtain

\begin{thm}
\label{thm:div}
For any $\varepsilon>0$ there exists $B=(b_{ij})_{ij}\in C^{1}(\Omega)$, with $\|B\|_{C^1}<\varepsilon$, 
  such that all the eigenvalues of  the problem (\ref{div-down}), with coefficient $\tilde A=A+B$ are simple.
\end{thm}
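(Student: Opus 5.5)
The plan follows the transversality scheme used for generic simplicity (Uhlenbeck, Micheletti, Henry): perturb the coefficient matrix and show that a single small perturbation splits any multiple eigenvalue.

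\emph{Variational setting.} Write \eqref{div-down} weakly on $H^1(\Omega)$ with the bilinear form
\[
\mathcal{E}_A(u,v)=\int_\Omega A\nabla u\cdot\nabla v+\int_{\partial\Omega}uv .
\]
Here $\partial_\nu$ means the conormal derivative $A\nabla u\cdot\nu$. This form is coercive on $H^1(\Omega)$ thanks to the boundary mass term, which is exactly why the Robin term was kept. The eigenvalue problem then reads $\mathcal{E}_A(u,v)=\lambda\int_S uv$. Equivalently, let $E_A: L^2(S)\to L^2(S)$ be the operator that takes $g$ to the trace on $S$ of the solution $w$ of $\mathcal{E}_A(w,v)=\int_S gv$ for all $v$. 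This $E_A$ is compact, self-adjoint and positive, and its eigenvalues are $1/\lambda$. The eigenvalues therefore form a discrete sequence depending continuously on $A$, and each eigenspace is finite dimensional.

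\emph{Reduction.} For $N\in\mathbb{N}$, let $\mathcal{A}_N$ be the set of admissible matrices $B$ (symmetric, $C^1$, small) for which the first $N$ eigenvalues of $A+B$ are simple. By continuity of the spectrum, $\mathcal{A}_N$ is open. The main step is to show that $\mathcal{A}_N$ is dense. Then $\bigcap_N\mathcal{A}_N$ is residual, hence nonempty in every ball, and the Baire category theorem gives the required $B$ with $\|B\|_{C^1}<\varepsilon$. For density, argue by induction on the multiplicity. Suppose $\lambda$ is an eigenvalue of $A$ with multiplicity $m\ge2$, and let $u_1,\dots,u_m$ be an orthonormal basis of its eigenspace in the inner product $\int_S uv$. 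The standard perturbation result (Kato's analytic perturbation, or the Lemma of Micheletti type presumably proved earlier in the paper) says the following. For $t\mapsto A+tB$, the $m$ eigenvalue branches bifurcating from $\lambda$ have derivatives at $t=0$ equal to the eigenvalues of the symmetric $m\times m$ matrix
\[
M(B)_{kl}=-\int_\Omega B\nabla u_k\cdot\nabla u_l .
\]
If $M(B)$ is not a multiple of the identity for some $B$, then for small $t$ the eigenvalue $\lambda$ splits into eigenvalues of strictly lower multiplicity. No new collisions occur among the first $N$ eigenvalues for $t$ small. Iterating finitely many times reaches $\mathcal{A}_N$.

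\emph{Main obstacle: the transversality step.} We must rule out that $\int_\Omega B\nabla u_k\cdot\nabla u_l=c(B)\delta_{kl}$ for every symmetric $B\in C^1$. Localizing with $B=\phi(x)\,e_i\otimes e_j$ symmetrized, for arbitrary $\phi\in C^1_c(\Omega)$, this identity would force, pointwise in $\Omega$,
\[
\nabla u_1\otimes\nabla u_1=\nabla u_2\otimes\nabla u_2, \qquad \nabla u_1\otimes\nabla u_2+\nabla u_2\otimes\nabla u_1=0 .
\]
The first identity gives $\nabla u_2=\pm\nabla u_1$ at every point. The second then gives $\nabla u_1\otimes\nabla u_1=0$, so $\nabla u_1\equiv0$ in $\Omega$ and $u_1$ is constant. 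A constant $c$ satisfies $Lc=0$, but the boundary condition gives $c=\lambda d\,c$. This is impossible on $W$, where $d=0$, unless $c=0$. Since $W^{\mathrm o}\neq\emptyset$, we get $u_1=0$, a contradiction.

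Hence $M(B)$ cannot be scalar for all $B$, so the splitting always occurs, which gives the density of $\mathcal{A}_N$. The delicate points are the rigorous first-order expansion in $C^1$ coefficients, including regularity of the $u_k$ sufficient to justify the localization, and checking that the perturbed matrices stay uniformly positive definite. The latter holds for $\|B\|_{C^1}$ small.
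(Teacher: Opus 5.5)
Your argument is correct, and its core is the paper's. You use the same coercive form with the boundary mass term and the same no-splitting condition $\int_\Omega B\nabla u_k\cdot\nabla u_l\,dx=c(B)\delta_{kl}$ for all $B$. You then localize to pointwise tensor identities, deduce that the eigenfunctions are constant, and reach the contradiction from $c=\lambda d\,c$ on $W$. You also spell out the rank-one step ($\nabla u_2=\pm\nabla u_1$, hence $\nabla u_1\otimes\nabla u_1=0$), which the paper only asserts.

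Two small remarks. First, the localization needs only $\partial_i u_k\,\partial_j u_l\in L^1(\Omega)$, so the regularity issue you flag does not arise. Second, the $\lambda$-branches have derivatives given by $+\int_\Omega B\nabla u_k\cdot\nabla u_l\,dx$; the minus sign belongs to the $\mu=1/\lambda$ branches, as in the paper. This is harmless, since only whether $M(B)$ is scalar matters.

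You genuinely differ from the paper in framework and in globalization.
\begin{itemize}
\item \emph{Framework.} On $L^2(S)$ the inner product does not depend on $B$. So every $E_{A+tB}$ is self-adjoint in one fixed Hilbert structure, and Kato--Rellich applies directly. The paper works on $H^1(\Omega)$, where $E_B$ is self-adjoint for $A_B$ rather than for the fixed form $A$, and it relies on the abstract Theorem~\ref{thm:astratto}.
\item \emph{Globalization.} The paper builds $B=\sum_\ell B_\ell$ by a countable iteration with geometrically decreasing norms. You instead apply Baire category to the open dense sets $\mathcal{A}_N$. Openness automatically enforces what the paper leaves implicit: later perturbations must stay below the spectral gaps already created. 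You also get the stronger conclusion that the admissible $B$ form a residual set. The paper's iteration is more hands-on, but it needs that bookkeeping to pass to the limit.
\end{itemize}
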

By applying the same methodology, we obtain a similar result for the following case (proof omitted)
\begin{cor}\label{thm:div-up}
Given $A=(a_{ij})_{ij}$ a $C^1(\Omega)$ positive definite matrix, for any $\varepsilon>0$ there exists $B=(b_{ij})_{ij}\in C^{1}(\Omega)$, with $\|B\|_{C^1}<\varepsilon$, 
  such that all the eigenvalues of  the problem 
  \begin{equation} 
   \left\{
    \begin{array}{cc}
        \partial_i \Big((a_{ij}+b_{ij})\partial_j u\Big) +u=0& \text{ on } \Omega\\
         \partial_\nu u=d\lambda u & \text{ in } \partial \Omega
    \end{array}.
    \right.
\end{equation}
\end{cor}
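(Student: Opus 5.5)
The statement just above is Corollary~\ref{thm:div-up}: generic simplicity, under $C^1$ perturbations of the coefficient matrix, for the problem $\partial_i((a_{ij}+b_{ij})\partial_j u)+u=0$ in $\Omega$, $\partial_\nu u=d\lambda u$ on $\partial\Omega$. We read the equation with the sign that makes the operator coercive, i.e. $-\partial_i(\tilde a_{ij}\partial_j u)+u=0$, and $\partial_\nu$ as the conormal derivative $\tilde a_{ij}\nu_i\partial_j u$. We follow the transversality scheme used for Theorem~\ref{thm:div}, interchanging the roles of the bulk mass term and the boundary Robin term.

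\textbf{Step 1: variational setting.} On $H^1(\Omega)$ take the scalar product
\[
\langle u,v\rangle_{\tilde A}=\int_\Omega \tilde a_{ij}\partial_j u\,\partial_i v+\int_\Omega uv .
\]
It is equivalent to the standard one as long as $\tilde A=A+B$ stays uniformly positive definite, which holds for $\|B\|_{C^1}$ small. The eigenvalue problem reads $\langle u,v\rangle_{\tilde A}=\lambda\int_S uv$ for all $v$.

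The map $u\mapsto \int_S u\,\cdot$ is compact, through the compact trace $H^1(\Omega)\to L^2(\partial\Omega)$. Hence the eigenvalues are $\mu=1/\lambda$ of a compact self-adjoint nonnegative operator $T_B$. The eigenfunctions with $\lambda<\infty$ live in the orthogonal complement of $\ker T_B=\{u: u|_S=0\}$. The eigenvalues form a discrete sequence tending to $+\infty$, each of finite multiplicity.

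\textbf{Step 2: reduction to one eigenvalue.} Use the standard abstract transversality lemma of Teytel / Micheletti type, as in the previous theorems. Fix an eigenvalue $\lambda_0$ of multiplicity $m>1$ for a given $B_0$, with $L^2(S)$-normalized eigenfunctions $u_1,\dots,u_m$. It suffices to find a direction $B$ such that the $m\times m$ matrix
\[
M_{kl}(B)=\int_\Omega b_{ij}\,\partial_j u_k\,\partial_i u_l
\]
is not a multiple of the identity. Differentiating the identity $\langle u,v\rangle_{\tilde A}=\lambda\int_S uv$ at $B_0$ in direction $B$ gives the derivatives of the perturbed eigenvalues as the eigenvalues of $M(B)$.

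This condition implies the following. The set of $B$ for which $\lambda_0$ stays of multiplicity $m$ is locally a submanifold of codimension $\geq 1$, in fact $m(m+1)/2-1$. A countable-intersection / Baire argument over the eigenvalue index then yields a dense (residual) set of small $B$ with all eigenvalues simple.

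\textbf{Step 3: the key nondegeneracy.} This is the main obstacle. Suppose by contradiction that $M(B)=c(B)\,I$ for every symmetric $B\in C^1$. Choosing $B=\phi(x)\,\xi\otimes\xi$ with $\phi$ arbitrary and $\xi$ a fixed vector gives the pointwise identities
\[
(\xi\cdot\nabla u_k)(\xi\cdot\nabla u_l)=0\ (k\neq l),\qquad (\xi\cdot\nabla u_1)^2=(\xi\cdot\nabla u_2)^2
\]
a.e. in $\Omega$, for all $\xi$. Polarizing in $\xi$ turns these into $\nabla u_1\otimes\nabla u_2+\nabla u_2\otimes\nabla u_1=0$, which forces $\nabla u_1=0$ or $\nabla u_2=0$ at each point. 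Together with $|\nabla u_1|=|\nabla u_2|$ this gives $\nabla u_1\equiv0$ on $\Omega$.

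So $u_1$ is constant. The interior equation $-\operatorname{div}(\tilde A\nabla u_1)+u_1=0$ then forces $u_1\equiv0$, contradicting the normalization $\int_S u_1^2=1$. Here we use unique continuation only mildly, or not at all, since the pointwise identity already holds everywhere. The mass term is essential precisely at this point; it plays the role that the boundary term $\int_{\partial\Omega}u$ played for Theorem~\ref{thm:div}.

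The technical care lies in two places. First, one must check that $\tilde A$ varies in a Banach space on which the eigenvalue branches are differentiable: analytic perturbation theory for the family $T_B$ handles this. Second, one must check that $C^1$ test matrices $\phi\,\xi\otimes\xi$ are rich enough, since they are dense in the relevant duals when paired with $L^1$ functions $\partial u_k\partial u_l$.
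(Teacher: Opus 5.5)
Your proof is correct and is essentially the paper's (omitted) argument: it is the scheme of Theorem~\ref{thm:div} with the mass term placed in the bulk. The no-splitting condition $\int_\Omega b_{ij}\,\partial_j e_r\,\partial_i e_s\,dx=c\,\delta_{rs}$ for all $B$ forces $\nabla e_r\equiv 0$, and the equation then gives $e_r\equiv 0$. Your restriction to symmetric test fields $\phi\,\xi\otimes\xi$ followed by polarization is cleaner than the paper's entrywise choice of $b_{ij}$, because it keeps the perturbed problem self-adjoint.

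The one loose point is the asserted codimension $m(m+1)/2-1$. That would need surjectivity of $B\mapsto M(B)$ onto symmetric matrices, which you do not prove. Nothing depends on it, though: openness and density of the sets where the first $N$ eigenvalues are simple already give your Baire conclusion, with density coming from the multiplicity-lowering step of Theorem~\ref{thm:astratto}. The paper reaches the same end by iterated summation of perturbations.
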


All these results are obtained through a suitable application of Micheletti’s approach to the simplicity of eigenvalues (see, e.g., \cite{mi1,mi2,Lupo}). The fundamental tool for this analysis is the following abstract result (see \cite{mi2, fgmp}), which provides a "non-splitting" condition for eigenvalues under operator perturbation.

\begin{thm}\label{thm:astratto} 
Let $T_{b}:X\rightarrow X$ be a compact, self-adjoint operator on a Hilbert space $X$, which depending smoothly on a parameter $b$ in a real Banach space $B$. 
Assume that $T_{b}$ is Frechét differentiable in $b=0$. Let $\bar{\lambda}$ an eigenvalue for $T_0$ with multiplicity $m>1$  and let $x_{1}^{0},\dots,x_{m}^{0}$ form an orthonormal basisof the corresponding eigenspace. If  $\lambda(b)$ is an eigenvalue for $T_{b}$ such that $\lambda(0)=\bar\lambda$ and  $\lambda(b)$ maintains multiplicity $m$ for all $b$ with $\|b\|_{C^{0}}$ small, then for all such $b$ there exist a $\rho=\rho(b)\in\mathbb{R}$
for which
\begin{equation}
\left\langle T'(0)[b]x_{j}^{0},x_{i}^{0}\right\rangle _{X}=\rho\delta_{ij}\text{ for }i,j=1,\dots,m.
\label{eq:spezzamentoastratto}
\end{equation}
\end{thm}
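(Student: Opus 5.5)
The plan is to work entirely inside the finite-dimensional eigenspace $E=\mathrm{span}\{x_1^0,\dots,x_m^0\}$. There, multiplicity preservation forces the perturbed eigenprojection to act as a scalar multiple of the identity, and we differentiate this at $b=0$. Let $\bar\lambda\neq 0$; this can be arranged since nonzero eigenvalues of a compact operator have finite multiplicity, and $m$ is finite by hypothesis. Choose a small circle $\Gamma$ around $\bar\lambda$ in $\mathbb{C}$ that encloses no other point of $\sigma(T_0)$. Since $b\mapsto T_b$ is continuous and Fréchet differentiable at $0$, the Riesz projection
\[
P_b=\frac{1}{2\pi i}\oint_\Gamma (z-T_b)^{-1}\,dz
\]
is well defined for $\|b\|$ small, has rank $m$, and is differentiable at $b=0$, with
\[
P'(0)[b]=\frac{1}{2\pi i}\oint_\Gamma (z-T_0)^{-1}T'(0)[b](z-T_0)^{-1}\,dz .
\]

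By hypothesis the total multiplicity inside $\Gamma$ is $m$ and is carried by the single eigenvalue $\lambda(b)$. Therefore $T_bP_b=\lambda(b)P_b$. To reduce to fixed coordinates, take the Kato transformation function $U_b$ (or simply orthonormalize $P_bx_j^0$), which is unitary, maps $E$ onto $\mathrm{Ran}P_b$, and satisfies $U_0=I$. The $m\times m$ matrix
\[
M(b)_{ij}=\langle T_bU_bx^0_j,U_bx^0_i\rangle
\]
then equals $\lambda(b)\delta_{ij}$. Alternatively, and more directly, consider $P_0T_bP_b$ restricted and project back, or consider $M(b)=P_0T_bP_bP_0$ viewed on $E$. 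Since $P_bP_0$ restricted to $E$ is invertible for small $b$, one gets $P_0T_bP_b|_E=\lambda(b)P_0P_b|_E$.

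Next we differentiate at $b=0$. From $P_0T_bP_bP_0=\lambda(b)P_0P_bP_0$ we obtain
\[
P_0T'(0)[b]P_0+P_0T_0P'(0)[b]P_0=\lambda'(0)[b]P_0+\bar\lambda P_0P'(0)[b]P_0 .
\]
Since $P_0T_0=\bar\lambda P_0$, the terms containing $P'$ cancel, which leaves
\[
P_0T'(0)[b]P_0=\lambda'(0)[b]\,P_0 .
\]
Taking matrix entries in the basis $x_i^0$ gives \eqref{eq:spezzamentoastratto} with $\rho=\lambda'(0)[b]$. One point needs care here: $\lambda(b)$ must be differentiable at $0$. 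This follows from $\lambda(b)=\frac1m\mathrm{tr}(T_bP_b)$, which is differentiable because both factors are, with $P_b$ of finite rank.

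The main obstacle is justifying the differentiability of $P_b$ and of $\lambda(b)$ at $b=0$ from Fréchet differentiability of $T_b$ at $0$ alone. This goes through the resolvent identity
\[
(z-T_b)^{-1}-(z-T_0)^{-1}=(z-T_b)^{-1}(T_b-T_0)(z-T_0)^{-1},
\]
together with uniform boundedness of the resolvent on $\Gamma$ for small $\|b\|$. If one prefers to avoid contour integrals, a more elementary route is available. Write $T_b y_j(b)=\lambda(b)y_j(b)$ with $y_j(b)=P_bx_j^0$, pair with $x_i^0$, and use self-adjointness:
\[
\langle (T_b-T_0)y_j(b),x_i^0\rangle=(\lambda(b)-\bar\lambda)\langle y_j(b),x_i^0\rangle .
\]
Dividing by $\|b\|$ along the ray $tb$ and letting $t\to0$, using $y_j(tb)\to x_j^0$, gives the same identity.
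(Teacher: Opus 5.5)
The paper does not prove this statement. It imports it from \cite{mi2,fgmp} and uses it as a black box, so there is no internal argument to compare yours with.

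Your proof is correct under the intended reading of the hypotheses. That reading is: $\bar\lambda$ is isolated in $\sigma(T_0)$, and $\lambda(b)\to\bar\lambda$. Then $\lambda(b)$ lies inside $\Gamma$ and, having multiplicity $m=\mathrm{rank}\,P_b$, exhausts $\mathrm{Ran}\,P_b$. The continuity of $\lambda(b)$ is only implicit in the statement, but the statement would be false without it.

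The following steps are all sound:
\begin{itemize}
\item the compression $P_0T_bP_bP_0=\lambda(b)P_0P_bP_0$;
\item the cancellation of the $P'$ terms via $P_0T_0=\bar\lambda P_0$;
\item the differentiability of $\lambda(b)=\frac1m\mathrm{tr}(T_bP_b)$, which is legitimate because $|\mathrm{tr}\,A|\le\mathrm{rank}(A)\|A\|$ and all difference quotients involved have rank bounded by a fixed multiple of $m$.
\end{itemize}

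Your ray argument is the more economical version. It uses only continuity of $P_b$ at $b=0$, and the diagonal entries show that $\lim_{t\to0}(\lambda(tb)-\bar\lambda)/t$ exists rather than presupposing it. It does not actually avoid contour integrals, though, since $y_j(b)=P_bx_j^0$ is still the Riesz projection. The detour through Kato's transformation function and the matrix $M(b)$ is never used and can be dropped.

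The one sentence you must repair is the justification of $\bar\lambda\neq0$. Finiteness of $m$ does not exclude $\bar\lambda=0$ when $\dim X=\infty$. In that case $0$ is in general an accumulation point of $\sigma(T_0)$, so no circle $\Gamma$ exists.

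This is not just a limitation of your method; the statement itself can fail at a non-isolated $\bar\lambda=0$. An $m$-fold eigenvalue $\lambda(b)\to0$ can be supplied by pairs of eigenvalues accumulating at $0$ while the original eigenspace splits. For example, take the diagonal operator with $T_be_1=be_1$, $T_be_2=2be_2$, plus pairs of entries $\alpha_k$, $\alpha_k+\epsilon_k\eta_k(b)$ with $\alpha_k\to0$. Choose the smooth cutoffs $\eta_k$ so that each pair collides on an interval of $b$ shrinking to $0$, and choose $\epsilon_k$ tiny so the dependence on $b$ stays smooth in operator norm. Then $\langle T'(0)[b]e_j,e_i\rangle=b\,\mathrm{diag}(1,2)$, which is not scalar.

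So you should state $\bar\lambda\neq0$ (or, more generally, $\bar\lambda$ isolated in $\sigma(T_0)$) as an assumption rather than claim it can be arranged. This costs nothing in the paper, where the theorem is only applied to eigenvalues $\mu=1/\lambda>0$ of $E_\Omega$, $E_b$, $E_B$.
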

\subsection{Notations}\label{sec:not}
We set here some short notations for integral quantities which will be fundamental in the proof of Theorem \ref{thm:main-simp} 
\begin{eqnarray*}
    L_1(u,\varphi)&:=&\int_\Omega-\psi_t \frac{\partial}{\partial x_t} \left(\nabla u\nabla \varphi\right) + \psi_i\frac{\partial }{ \partial x_j}\left(\frac{\partial u}{\partial x_i}\frac{\partial \varphi}{\partial x_j}+
\frac{\partial u}{\partial x_j}\frac{\partial \varphi}{\partial x_i}
\right) dx\\
  I_1(u,\varphi)&:=&\int_{\partial\Omega}\psi_t \nu_t \nabla u\nabla \varphi -
\psi_i \nu_j\left(\frac{\partial u}{\partial x_i}\frac{\partial \varphi}{\partial x_j}+\frac{\partial u}{\partial x_j}\frac{\partial \varphi}{\partial x_i}
\right) d\sigma\\
  I_2( u,\varphi)&:=&\int_{\partial \Omega} u\varphi \left(\div \psi -\sum_{r=1}^n \partial_\nu \psi_r \nu_r\right) d\sigma\\
   I_3( u,\varphi)&:=&  
\int_\Omega-\psi_t \frac{\partial}{\partial x_t}  u \varphi  dx
+\int_{\partial\Omega}\psi_t \nu_t  u \varphi d\sigma
   \\
      G(u,\varphi)&:=&\int_{ S} u\varphi \left(\div \psi -\sum_{r=1}^n \partial_\nu \psi_r \nu_r\right) d\sigma
 \end{eqnarray*}

\section{Variational setting and proof of Theorem \ref{thm:main-simp}}\label{sec:sloshsimp}

In this section we address equation (\ref{Slosh-down}).
As stated in the introduction, this equation is a modification of the original sloshing problem (\ref{slosh}), and the inclusion of a linear term $u$ in the Neumann condition on the boundary overcomes a technical obstacle.  Specifically, this boundary formulation allows us to define on $H^1(\Omega)$ the scalar product
\[
a_{\Omega}(u,v):=\int_\Omega \nabla u \nabla v \ dx +\int_{\partial \Omega} uv\ d\sigma .
\]
which equips $H^1(\Omega)$ with the equivalent norm $\| u\|_{H^1}=\|u\|:=\big(a_{\Omega}(u,v)\big)^{1/2}$. 

Problem  (\ref{Slosh-down}) admits a weak formulation: a function $e$ is an eigenfunction with corresponding eigenvalue $\lambda$ if and only if 
\begin{equation}\label{eq:weaksimp}
    a_{\Omega}(e,\varphi)=\lambda\int_{\partial \Omega} d(x)e\varphi\ d\sigma .
\end{equation}
for all $\varphi\in H^1(\Omega)$. We recall here that $d(x)=1$ if $x\in S$ and $d(x)=0$ if $x\in W$.

Given $f\in H^1(\Omega)$, we consider the continuous linear functional on $H^1(\Omega)$ given by
$$
L_f(\varphi):=\int_{\partial\Omega} d(x)f\varphi\  d\sigma,
$$
and, by Riesz theorem, for any $f\in H^1(\Omega)$, there exists a unique $w\in H^1(\Omega)$, such that 
\[
a_{\Omega}(w,\varphi)=L_f (\varphi)=\int_{\partial \Omega} d(x)f\varphi\  d\sigma .
\]
Now, let us define $E_\Omega$ as the operator, from $ H^1(\Omega)$ in itself, which associates $f$ to ,$w$, i.e. 
\begin{equation}\label{eq:Eusimp}
 a_{\Omega}(w,\varphi)=a_{\Omega}(E_\Omega(f),\varphi)=\int_{\partial \Omega} d(x)f\varphi\  d\sigma 
\end{equation}
for all $\varphi \in H^1 (\Omega)$. So,  $(\lambda, e)$ are an eigenpair of problem (\ref{Slosh-down}), in a weak sense, if and only if $e=E_\Omega(\lambda e)$, or, equivalently, $e$ is an eigenfunction of $E_\Omega$ with eigenvalue $\mu=1/\lambda$.

As $\{\lambda_k\}_k$ is a sequence of positive increasing eigenvalues for problem 
(\ref{Slosh-down}), the reciprocals  $\{\mu_k:=1/\lambda_k\}_k$  form a decreasing sequence of positive eigenvalues of $E_\Omega $ decreasing to zero while $k$ increases. Also, any $\mu_k$ admits the following Min-Max characterization:
\begin{eqnarray*}  
\mu_1= \max_{a_\Omega(\varphi,\varphi)=1}\int_{\partial\Omega} d(x)\varphi^2
&\quad&
\mu_t= 
\inf_{[v_1,\dots,v_{t-i}]}
\sup_{
\begin{array}{c}
a_\Omega(\varphi,v_i)=0\\
a_\Omega(\varphi,\varphi)=1
\end{array}}\int_{\partial\Omega} d(x)\varphi^2.
\end{eqnarray*}
In this way we transformed the original problem in an eigenvalue problem for the operator $E_\Omega$. 

This entire construction carries over to any perturbed domain $\Omega_\psi$. However, in order to use Theorem \ref{thm:astratto} in the proof,  it is necessary to work within the fixed space ${H^1}(\Omega)$. To that end we pull back any function defined on $\Omega_\psi$ on the unperturbed domain $\Omega$ via the diffeomorphism $(I+\psi)$.  

Specifically, if $x\in \Omega $ and $\xi=(I+\psi)(x)=x+\psi(x)$, given a function $\tilde u\in  H^1(\Omega_\psi)$, we define the corresponding function $u\in {H^1}(\Omega)$ by
 $$
\tilde u(\xi)=\tilde u(x+\psi (x)):=u(x).
$$
This defines a map 
\begin{align*}
\gamma_\psi&:{H^1}(\Omega_{\psi})\rightarrow {H^1}(\Omega);\\
\gamma_\psi(\tilde{u})&:=u(x)=\tilde{u}(x+\psi(x))=\tilde u\circ (I+\psi)(x).
\end{align*}
If $\|\psi\|_{C^2}<1/2$, the map $I+\psi$ is invertible with inverse $(I+\psi)^{-1}=I+\chi $ and the following maps are continuous isomorphisms:
\begin{eqnarray}
 \label{gamma-psi}
\gamma_{\psi} :H^1(\Omega_{\psi})\rightarrow H^1(\Omega)
& &
\gamma_{\psi}^{-1}=\gamma_{\chi}:H^1(\Omega)\rightarrow H^1(\Omega_{\psi}).
\end{eqnarray}
Now, given $\tilde u,\tilde \varphi \in H^1(\Omega_\psi)$ we define a bilinear form $\mathcal{A}$  on $H^1(\Omega)$ by pulling back the bilinear form  $a_{\Omega_\psi} $ on $H^1(\Omega_\psi)$ as follows.
For any $\tilde u,\tilde \varphi\in H^1 (\Omega_\psi)$ set
\begin{multline}\label{def-Epsi}
    a_{\Omega_\psi}(\tilde u,\tilde \varphi)=
   \int_{ \Omega_\psi}\nabla \tilde u\nabla\tilde \varphi\  dx
    +\int_{\partial \Omega_\psi}\tilde u\tilde \varphi\ d\sigma 
    \\
    =\mathcal{A}_\psi (u,\varphi)
    +\int_{\partial \Omega} u \varphi B_\psi d\sigma =:\mathcal{E}_\psi (u,\varphi) 
\end{multline}
where $u=\gamma_{\psi}\tilde u$, $\varphi=\gamma_{\psi} \tilde \varphi$, $B_\psi$  represents the Jacobian  of the boundary change of variables, and
\begin{equation}
    \label{defA}
\mathcal{A}_\psi (u,\varphi) :=\int_{\Omega_\psi} \nabla \tilde u \nabla \tilde \varphi \ dx.  
\end{equation}
  At this point, we may express the counterpart of (\ref{eq:Eusimp}): if $\tilde u=E_{\Omega_\psi}(\tilde u)$, then
 \begin{equation}\label{eq:fond1simp}
     a_{\Omega_\psi}(E_{\Omega_\psi}(\tilde u),\tilde \varphi)=\int_{\partial {\Omega_\psi}} d_\psi(\xi) \tilde u \tilde \varphi\  d\sigma =\int_{\partial \Omega} d(x)u\varphi\ B_\psi d\sigma 
 \end{equation}
where $d_\psi(\xi)=d_\psi(x+\psi(x))=d(x)$ for $\xi:=(x+\psi(x))\in\partial\Omega_\psi$ and for $x\in \partial \Omega$. Also, 
\begin{equation}\label{eq:fond2simp}
       a_{\Omega_\psi}(E_{\Omega_\psi}(\tilde u),\tilde \varphi)= 
       a_{\Omega_\psi}( E_{\Omega_\psi}(\gamma_\psi^{-1} u),\gamma_\psi^{-1}  \varphi)
=\mathcal{E}_\psi (\gamma_\psi E_{\Omega_\psi}(\gamma_\psi^{-1} u),\varphi).
\end{equation}
Let us define $T_\psi:= \gamma_\psi E_{\Omega_\psi}\gamma_\psi^{-1} $ , $T_\psi :H^1(\Omega)\rightarrow H^1(\Omega)$. We have that $T_0=E_\Omega $,  and, combining (\ref{eq:fond1simp}) and (\ref{eq:fond2simp}), it holds
\begin{equation}\label{fondsimp}
    \mathcal{E}_\psi (T_\psi u,\varphi)=\int_{\partial \Omega} d(x)u\varphi\ B_\psi d\sigma .
\end{equation} 
By the computational lemmas in Appendix \ref{Sec:der} we now possess all necessary components 
to differentiate equation (\ref{fondsimp}) with respect to the $\psi$ variable, at $\psi=0$, obtaining the principal result of this section
 \begin{cor}
 It holds
 \begin{equation*}
    \mathcal{E}_\psi'(0)[\psi] (T_0 u,\varphi)+
      \mathcal{E}_0 (T_\psi '(0)[\psi] u,\varphi)=\left. \frac{\partial}{\partial \psi}\int_{\partial \Omega}d(x)  u\varphi\ B_\psi d\sigma\right|_{\psi=0}.
\end{equation*} 
In addition, in light of computation of Appendix \ref{Sec:der}, we have
\begin{multline}\label{cond1}
   \mathcal{E}_0 (T_\psi '(0)[\psi] u,\varphi)= a_\Omega  (T_\psi '(0)[\psi] u,\varphi)=\\
   =\int_\Omega\psi_t \frac{\partial}{\partial x_t} \left(\nabla T_0u\nabla \varphi\right) - \psi_i\frac{\partial }{ \partial x_j}\left(\frac{\partial T_0u}{\partial x_i}\frac{\partial \varphi}{\partial x_j}+
\frac{\partial T_0u}{\partial x_j}\frac{\partial \varphi}{\partial x_i}
\right) dx\\ 
-    \int_{\partial\Omega}\psi_t \nu_t \nabla T_0u\nabla \varphi -
\psi_i \nu_j\left(\frac{\partial T_0u}{\partial x_i}\frac{\partial \varphi}{\partial x_j}+\frac{\partial T_0u}{\partial x_j}\frac{\partial \varphi}{\partial x_i}
\right) d\sigma\\
 -   \int_{\partial \Omega} T_0u\varphi \left(\div \psi -\sum_{r=1}^n \partial_\nu \psi_r \nu_r\right) d\sigma+\int_{S}  u\varphi \left(\div \psi -\sum_{r=1}^n \partial_\nu \psi_r \nu_r\right)d\sigma  \\
    =:-L_1( T_0u, \varphi)-
I_1( T_0u, \varphi)
-I_2(T_0 u, \varphi)+G(u,\varphi),
\end{multline}
where $L_1$, $I_1$, $I_2$ and $G$ are defined in the notation section.
\end{cor}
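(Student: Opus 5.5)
The plan is to differentiate identity (\ref{fondsimp}) at $\psi=0$ in the direction $\psi$, and then to evaluate each of the three resulting derivatives using the computational lemmas of Appendix \ref{Sec:der}.

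\textbf{Step 1: the product rule.} The operator $T_\psi$ is $C^1$ in $\psi$ near $0$. This follows because $\gamma_\psi$ and $\gamma_\psi^{-1}$ are isomorphisms depending smoothly on $\psi$, and because $\mathcal{E}_\psi$ is a coercive bilinear form whose coefficients are polynomial in $D\psi$, divided by $\det(I+D\psi)$. Differentiating $\mathcal{E}_\psi(T_\psi u,\varphi)$ by the product rule, with $u,\varphi$ fixed in $H^1(\Omega)$, gives the first displayed identity. The next two steps compute its pieces.

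\textbf{Step 2: the term $\mathcal{E}_0$.} At $\psi=0$ we have $\gamma_0=I$ and $B_0=1$, so $\mathcal{E}_0=a_\Omega$. This gives the first equality in (\ref{cond1}).

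\textbf{Step 3: the derivatives of $\mathcal{E}_\psi$ and of the right-hand side.} Change variables $\xi=x+\psi(x)$. Then
\[
\mathcal{A}_\psi(u,\varphi)=\int_\Omega (D\xi)^{-T}\nabla u\cdot (D\xi)^{-T}\nabla\varphi\,\det(I+D\psi)\,dx .
\]
Using $(I+D\psi)^{-1}=I-D\psi+o(\psi)$ and $\det(I+D\psi)=1+\div\psi+o(\psi)$, its derivative is
\[
\int_\Omega \div\psi\,\nabla u\nabla\varphi-\partial_j\psi_i\Big(\partial_iu\,\partial_j\varphi+\partial_ju\,\partial_i\varphi\Big)\,dx .
\]
Integrating by parts to move the derivatives off $\psi$ turns this into $L_1(u,\varphi)+I_1(u,\varphi)$, up to the sign conventions fixed in the notation section. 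For the boundary Jacobian, the appendix lemma should give $B_\psi'(0)[\psi]=\div\psi-\sum_r\partial_\nu\psi_r\,\nu_r$, the tangential divergence of $\psi$. Hence the derivative of $\int_{\partial\Omega}u\varphi B_\psi$ is $I_2(u,\varphi)$. The same formula restricted to $S$ (since $d=\mathbbm 1_S$) gives the right-hand side $G(u,\varphi)$.

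\textbf{Step 4: assembly.} Insert $T_0u$ in place of $u$ in the Step 3 formulas and solve for $a_\Omega(T'_\psi(0)[\psi]u,\varphi)$. This yields
\[
a_\Omega(T'_\psi(0)[\psi]u,\varphi)=-L_1(T_0u,\varphi)-I_1(T_0u,\varphi)-I_2(T_0u,\varphi)+G(u,\varphi).
\]

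\textbf{Main obstacle.} Step 3 is the delicate part, for two reasons.
\begin{itemize}
\item Justifying the integration by parts requires second derivatives of $T_0u$. For an eigenfunction $u$ these exist, since $T_0u=\mu u$ is harmonic with $C^{1,1}$ boundary, so elliptic regularity gives $H^2$. In general one can argue by density and state the identity for such $u,\varphi$.
\item Identifying the boundary Jacobian derivative as the tangential divergence needs care with the normal component. I would do this first in local graph coordinates and then rewrite the result invariantly.
\end{itemize}
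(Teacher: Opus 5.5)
Your proposal is correct and follows the paper's own route: the product rule on (\ref{fondsimp}), $\mathcal{E}_0=a_\Omega$, the derivative of $\mathcal{A}_\psi$ obtained by change of variables plus one integration by parts to give $L_1+I_1$ (Lemma \ref{lem:Astorto}), and the derivative of the boundary Jacobian as the tangential divergence $\div\psi-\sum_r\partial_\nu\psi_r\,\nu_r$ to give $I_2$ and $G$ (Lemma \ref{lem:B}, which the paper proves by a divergence-theorem argument rather than in graph coordinates). One correction to your regularity aside: the coefficient in $\partial_\nu u+u=\lambda d u$ jumps across $S\cap W$, so eigenfunctions are in general not in $H^2(\Omega)$; they are only $H^2$ away from the interface and $W^{2,p}$ for every $p<2$, which is still enough to justify the integrations by parts and is a point the paper also leaves implicit.
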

\subsection{The no splitting condition}
By the previous corollary we are in position to apply abstract Theorem \ref{thm:astratto} to Problem (\ref{mainpsisimp}). In fact, suppose that $\mu $ is an eigenvalue of $T_0$ with multiplicity $m>1$, and let $\{e_1, \dots,e_m \}$ be an orthonormal base for its eigenspace. If any perturbation $\psi$ leaves the multiplicity of $\mu$ unchanged, then, as in equation (\ref{eq:spezzamentoastratto}) we have to compute $   a_\Omega  (T_\psi '(0)[\psi] u,\varphi)$ and check that, for any $\psi$, there exists $\rho=\rho(\psi)\in \mathbb{R}$ such that
\begin{equation}
       a_\Omega  (T_\psi '(0)[\psi] e_r,e_s)=  \rho(\psi)\delta_{rs}.
\end{equation}
Having in mind (\ref{cond1}), and using that $T_0e_r=\mu e_r$ for all $r=1,\dots,m$, 
this can be formulated as
\begin{equation}
    \label{cond2}
    -L_1(\mu e_r,e_s)-
I_1(\mu  e_r,e_s)
-I_2(\mu  e_r, e_s)+G( e_r,e_s)=\rho\delta_{rs}.
\end{equation}
Equation (\ref{cond2}) represents a {\em nosplitting condition} for Problem  (\ref{mainpsisimp}). We will see that is possible to find a perturbation which violates condition  (\ref{cond2}).

To this aim, consider firstly a perturbation $\psi$ compactly supported in $\Omega$, so that $\psi\equiv0$ on the boundary.  In this case $I_1$, $I_2$, and $G$ vanish, and condition (\ref{cond2}) reads as

\begin{multline*}
    L_1( \mu e_r,e_s)=
  \mu \int_\Omega-\psi_i \frac{\partial}{\partial x_i} \nabla e_r\nabla e_s + \psi_i\frac{\partial }{ \partial x_j}\left(\frac{\partial e_r}{\partial x_i}\frac{\partial e_s}{\partial x_j}+
\frac{\partial e_r}{\partial x_j}\frac{\partial e_s}{\partial x_i}
\right) dx
\\=-\mu \delta_{rs}
\end{multline*}
Since this holds for arbitrary $\psi$, and  we may choose $\psi$ such that only one component $\psi_i$  is nonzero  at a time, then, for any $i$,  if $r\neq s$
\begin{equation*}
    \frac{\partial}{\partial x_i} \nabla e_r\nabla e_s +\sum_j\frac{\partial }{ \partial x_j}\left(\frac{\partial e_r}{\partial x_i}\frac{\partial e_s}{\partial x_j}+
\frac{\partial e_r}{\partial x_j}\frac{\partial e_s}{\partial x_i}
\right)=0
\end{equation*}
 almost everywhere on $\Omega$ and, when $r=s$, 
 \begin{equation*}
     \frac{\partial}{\partial x_i} |\nabla e_1|^2 +2\sum_j\frac{\partial }{ \partial x_j}\left(\frac{\partial e_1}{\partial x_i}\frac{\partial e_1}{\partial x_j}
\right)=\cdots=
\frac{\partial}{\partial x_i} |\nabla e_\nu|^2 +2\sum_j\frac{\partial }{ \partial x_j}\left(\frac{\partial e_\nu}{\partial x_i}\frac{\partial e_\nu}{\partial x_j}
\right)
 \end{equation*}
 That is means that, for any $\psi$ (not necessarily compactly supported in $\Omega$),  then $L_1(\mu e_r,e_s)=C(\psi)\delta_{rs}$, and we can absorb $L_1$  in the right hand side of (\ref{cond2}). Hence, the no splitting condition becomes
 \begin{equation}{\label{nosplitsimp}}
    -I_1( \mu e_r,e_s)
-I_2( \mu e_r, e_s)+G(e_r,e_s)=C(\psi)\delta_{rs}.
\end{equation}

\begin{rem}\label{uniquecont} To  study  the {\em no-splitting} condition (\ref{nosplitsimp}) it is useful to remind that
 if an eigenfunction $e$ and its normal derivative $\partial_\nu e$ vanish in a neighbourhood of a boundary point $\xi$, which does not belong to the interface $\Gamma:=S\cap W$, then  $e$ is identically zero.
 Indeed,
if   $\Omega$ is a  $C^{1,1}-$domain the eigenfunction $e$ belongs to  $H^2(S)\cap H^2(W)$ (see for example \cite[Theorem 2.4.2.7]{grisvard}). 
Moreover, by the   uniqueness in the Cauchy Problem as stated in \cite[p. 83]{henry}, we deduce that $e$ identically vanishes in a neighborhood
 $\xi$ and  by the Unique Continuation Principle we deduce that it vanishes anywhere in $\Omega$.

\end{rem}

In the next subsections we show that there exists perturbation which leaves one of the two components of the boundary fixed, for which (\ref{nosplitsimp}) does not hold.

\subsection{Perturbations supported in  $S$}\label{sec:Sneumann}

Now, we take a perturbation which leaves $W$ fixed. We immediately have that $I_2( \mu e_r, e_s)=\mu G( e_r, e_s)$, so the no splitting condition simplifies as
\begin{equation}
    -I_1(  \mu e_r,e_s)+(1-\mu) G( e_r, e_s)
=C(\psi)\delta_{rs}, 
\end{equation}
that is
  \begin{multline}\label{eq:splitS0}
    - \mu\int_{S}\psi_t\nu_t\nabla e_r\nabla e_s d\sigma
    + \mu \int_{S}
\psi_i\nu_j \left(\frac{\partial e_r}{\partial x_i}\frac{\partial e_s}{\partial x_j}+\frac{\partial e_r}{\partial x_j}\frac{\partial e_s}{\partial x_i}
\right) d\sigma\\
+(1-\mu)  \int_{S}e_r e_s \left(\div \psi -\sum_{r=1}^n \partial_\nu \psi_r \nu_r\right)d\sigma \ =
C(\psi)\delta_{rs}.
 \end{multline}
Notice that  $\nu_j\frac{\partial e_r}{\partial x_j}=\partial_\nu e_r=(\lambda-1) e_r$, so integrating by parts we get, for the second term of (\ref{eq:splitS0}), 
\begin{multline}
    \mu \int_{S}
\psi_i\nu_j \left(\frac{\partial e_r}{\partial x_i}\frac{\partial e_s}{\partial x_j}+\frac{\partial e_r}{\partial x_j}\frac{\partial e_s}{\partial x_i}
\right) d\sigma=\\
\mu(\lambda-1) \int_{S}
\psi_i\frac{\partial }{\partial x_i}\left(e_r e_s
\right) d\sigma=-\frac{\lambda-1}\lambda 
\int_{S}
e_r e_s\div\psi
 d\sigma,
\end{multline}
and, since in the third integral of (\ref{eq:splitS0}), $(1-\mu)=(\lambda-1)/\lambda$, the no splitting condition becomes

  \begin{multline}\label{eq:splitS1}
    - \frac 1\lambda\int_{S}\psi_t\nu_t\nabla e_r\nabla e_s d\sigma
+\frac{\lambda-1}\lambda   \int_{S}e_r e_s \left(\sum_{r=1}^n \partial_\nu \psi_r \nu_r\right)d\sigma \ =
C(\psi)\delta_{rs}.
 \end{multline}
There exists at least one component $\nu_t \neq 0$. For the sake of simplicity, let us take $\nu_1\neq 0$.
Now choose $\psi=(\psi_1,0,\dots,0)$ with $\psi_1$ arbitrarily defined on $S$ and such that $\frac{\partial \psi_1} {\partial \nu}=0$. Thus  $\nabla e_r \nabla e_s=0$ if $r\neq s$ and $|\nabla e_1|^2=\dots=|\nabla e_m|^2$ almost everywhere on $S$ and the term $\int_{S}\psi_t\nu_t \nabla e_r\nabla e_s d\sigma$ can be absorbed in the left hand side of the condition and we remain with
\begin{equation}
     C(\psi)\delta_{rs}=
     +\frac{\lambda-1}\lambda \int_{ S}  e_r  e_s\sum_{r=1}^n \partial_\nu \psi_r \nu_r d\sigma
\end{equation}
Since we can choose $\sum_{r=1}^n \partial_\nu \psi_r \nu_r $ as and arbitrary function on $S$, we get that $ e_r  e_s=0$ if $r\neq s$ and $| e_1|^2=\dots=| e_m|^2$ almost everywhere on $S$. This implies that on $S$ we get, for any $r=1,\dots, m$
\begin{eqnarray}
    e_r=0;&& \frac{\partial e_r}{\partial \nu}=(\lambda-1) e_r=0
\end{eqnarray}
which leads to a contradiction by the unique continuation principle (see Remark \ref{uniquecont}).

\subsection{Perturbations supported in $W$}

If $\psi$ leaves $S$ fixed, namely,  the support of $\psi$ intersects $\partial \Omega$ only within the set $W$, we have that $G( e_r,e_s)\equiv 0$. Proceeding as before, we get that the no splitting condition (\ref{nosplitsimp}) becomes
  \begin{multline}\label{eq:nospiltDir}
 C(\psi)\delta_{rs}=-\mu\int_{W}\psi_t \nu_t \nabla e_r\nabla e_s
 d\sigma
+\mu\int_{W}
\psi_t\left(\frac{\partial e_r}{\partial \nu}\frac{\partial e_s}{\partial x_t}+\frac{\partial e_r}{\partial x_t}\frac{\partial e_s}{\partial \nu}
\right) d\sigma \\
-\mu \int_{W} e_r e_s \left(\div \psi -\sum_{r=1}^n \partial_\nu \psi_r \nu_r\right) d\sigma
\end{multline}
Now, since $\partial_\nu e_r+e_r=0$, on $W$ (and the same holds for $e_s$)  we have $\left(\frac{\partial e_r}{\partial \nu}\frac{\partial e_s}{\partial x_t}+\frac{\partial e_r}{\partial x_t}\frac{\partial e_s}{\partial \nu}
\right)=\left(- e_r\frac{\partial e_s}{\partial x_t}-\frac{\partial e_r}{\partial x_t} e_s
\right)=-\frac{\partial }{\partial x_t}(e_r e_s)$, and, integrating by parts, equation (\ref{eq:nospiltDir}) reads as
\begin{multline*}
 C(\psi)\delta_{rs}=-\mu\int_{W}\psi_t \nu_t \nabla e_r\nabla e_sd\sigma
 +\mu\int_{W}
\frac{\partial \psi_t}{\partial x_t}e_r e_s d\sigma 
\\-\mu \int_{W} e_r e_s \left(\div \psi -\sum_{r=1}^n \partial_\nu \psi_r \nu_r\right) d\sigma\\
=-\mu\int_{W}\psi_t \nu_t \nabla e_r\nabla e_sd\sigma
+\mu \int_{W} e_r e_s \left( \sum_{r=1}^n \partial_\nu \psi_r \nu_r\right) d\sigma
\end{multline*}
Without loss of generality we may assume that $\nu_1 \neq 0$ in the support of $\psi$.  We then  choose $\psi=(\psi_1,0,\dots,0)$, with $\psi_1$ arbitrarily chosen in $W$. Hence, the nosplitting condition simplifies to 
\begin{equation*}
    -\mu\int_{W}\psi_1 \nu_1 \nabla e_r\nabla e_sd\sigma
+\mu \int_{W} e_r e_s  \partial_\nu \psi_1 \nu_1 d\sigma=C(\psi)\delta_{rs}
\end{equation*}
We now extend  $\psi_1$ in a neighborhood of $W$
in such a way that it satisfies  $\partial_\nu \psi_1=0$. Since $\psi_1$ remains arbitrary within  $W$, we deduce that $\nabla e_r\nabla e_s=0$ for $r\neq s$ and $|\nabla e_1|^2=\dots=|\nabla e_m|^2$ almost everywhere on $W$. Therefore, the no splitting condition reduces to:
\begin{equation}
     C(\psi)\delta_{rs}=\mu \int_{W} e_r e_s \left( \sum_{r=1}^n \partial_\nu \psi_r \nu_r\right) d\sigma
\end{equation}
and since $ \sum_{r=1}^n \partial_\nu \psi_r \nu_r$ is arbitrary,  it follows once again that $e_j=\partial_\nu e_j=0$ on $W$, for any $j=1,\dots,m$, leading to a contradiction.

\subsection{Proof of Theorem \ref{thm:main-simp}}
In this subsection we finally give the proof of the first of the main results of this paper. As claimed in the introduction, we use the same strategy of \cite[Proof of Thm 1]{fgmp},\cite[Sec 5]{MiSNS}, and \cite[Lemma 7]{mi2}, which we refers to for all the technical details which we will omit hereafter for the sake of readability
 
We start proving that we can split a single eigenvalue of multiplicity $m$ in several distinct  eigenvalues each with multiplicity strictly less then $m$. Then, by iterating the procedure, we can find a perturbation for which this eigenvalue splits in $m$ simple eigenvalue, and finally prove  Theorem \ref{thm:main-simp}.

\begin{prop}
\label{thm:main-tool} Let $\bar \mu$ an eigenvalue for the operator $T_0$ with
multiplicity $m>1$. Let $U$ and open bounded interval such
that 
\[
\bar{U}\cap\sigma\left(T_0\right)=\left\{ \bar{\mu}\right\} ,
\]
where $\sigma(T_0)$  represents  the spectrum of $T_0$. 

Then, there exists $\psi\in \mathcal{D}$, and $\bar\varepsilon>0$  such that with $\|\psi\|_{\mathcal{D}}\le\bar\varepsilon$ and
\[
\bar{U}\cap\sigma(T_\psi))=\left\{ \mu_{1}^{{\Omega_\psi}},\dots,\mu_{k}^{{\Omega_\psi}}\right\} ,
\]
where $1<k\le m$ and, for all $t=1,\dots, k$, any eigenvalue $\mu_{t}^{{\Omega_\psi}}$ has multiplicity $m_t<m$ , with $\sum_{t=1}^{k}m_{t}=m$.
Also, we can choose $\psi$ such which leaves $S$ unchanged,  or, reversely, which  leaves $W$ unchanged.
\end{prop}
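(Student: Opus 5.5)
We argue by contradiction, combining the abstract non-splitting result, Theorem \ref{thm:astratto}, with the analysis of condition (\ref{nosplitsimp}) in the two preceding subsections.

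First, the spectral setup. The map $\psi\mapsto T_\psi$ is Fréchet differentiable at $\psi=0$ as a map into the compact self-adjoint operators on $H^1(\Omega)$. Here self-adjointness is taken with respect to the scalar product $\mathcal{E}_\psi$, so one symmetrizes, or uses the equivalent family of scalar products as in \cite{mi2,fgmp}. Since $\bar U\cap\sigma(T_0)=\{\bar\mu\}$ and $T_\psi\to T_0$ in operator norm as $\|\psi\|_{C^2}\to0$, standard perturbation theory for compact self-adjoint operators gives the following. There is $\bar\varepsilon>0$ such that for $\|\psi\|\le\bar\varepsilon$ the part of $\sigma(T_\psi)$ in $\bar U$ lies in the interior of $U$ and consists of eigenvalues $\mu_1^{\Omega_\psi},\dots,\mu_k^{\Omega_\psi}$ whose multiplicities add up to $m$. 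This comes from the continuity of the total spectral projection $\frac{1}{2\pi i}\oint_{\partial U}(z-T_\psi)^{-1}dz$. Hence it suffices to find an admissible $\psi$ with $k>1$, since then every $m_t<m$.

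Second, suppose by contradiction that for every admissible $\psi$ with $\|\psi\|\le\bar\varepsilon$, in the chosen class (support meeting $\partial\Omega$ only in $S$, or only in $W$), we get $k=1$. Then $\bar\mu$ persists as a single eigenvalue $\mu(\psi)$ of multiplicity $m$. Theorem \ref{thm:astratto} then yields (\ref{eq:spezzamentoastratto}) for an orthonormal basis $e_1,\dots,e_m$ of the eigenspace. By the Corollary and (\ref{cond1}) this is exactly (\ref{cond2}). The class of admissible $\psi$ is a linear space, and (\ref{cond2}) is linear in $\psi$. So, rescaling, we may use any $\psi$ in that class, in particular those compactly supported in $\Omega$. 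The interior argument then absorbs $L_1$ into $C(\psi)\delta_{rs}$, giving (\ref{nosplitsimp}).

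Third, we apply the two subsection computations. If $\psi$ leaves $W$ fixed, the choice $\psi=(\psi_1,0,\dots,0)$ is used twice. Taking $\partial_\nu\psi_1=0$ and $\psi_1$ arbitrary on $S$ first forces $\nabla e_r\cdot\nabla e_s=c\,\delta_{rs}$ on $S$. Then letting $\sum_r\partial_\nu\psi_r\nu_r$ vary freely forces $e_re_s=c'\delta_{rs}$ on $S$. A symmetric matrix $(e_re_s)$ of rank at most one can equal $c'I_m$ with $m\ge2$ only if $c'=0$. So all $e_r$ vanish on $S$, and $\partial_\nu e_r=(\lambda-1)e_r=0$ there. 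Remark \ref{uniquecont} gives $e_r\equiv0$, a contradiction. The case where $S$ is fixed is identical, using $\partial_\nu e_r=-e_r$ on $W$.

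The main obstacle is realizing these prescribed boundary data by a genuine $C^2$ field of small norm. We need $\psi_1$ arbitrary on an open piece of $S$ (resp.\ $W$) away from the interface $\Gamma$ and away from points where $\nu_1=0$, with zero normal derivative there. Independently, we need $\partial_\nu\psi_1\nu_1$ to be an arbitrary function on that piece. We plan to do this by working in a tubular neighbourhood, localized by a cutoff away from $\Gamma$ and from the other boundary component, and writing $\psi_1(x+s\nu)=f(x)+s\,g(x)$. The localization itself loses no generality, since a.e.\ identities on every such piece give the identities a.e.\ on $S$ (resp.\ $W$). $C^{1,1}$ regularity of $\partial\Omega$ only gives a $C^{0,1}$ normal. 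So either $f,g$ are taken smooth and a density argument concludes, since the conditions are integral identities against arbitrary test functions, or the approximation lemmas of \cite{fgmp,mi2} are invoked.
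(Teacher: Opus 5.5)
\textbf{There is a genuine gap.} Your architecture matches the paper's short proof, which just invokes the subsection computations: continuity of the total spectral projection over $\partial U$, contradiction via Theorem~\ref{thm:astratto} (and you rightly note it is needed with the varying scalar products $\mathcal{E}_\psi$), and linearity in $\psi$. The gap is in the step you take over from those computations, the passage \eqref{eq:splitS0}$\to$\eqref{eq:splitS1}. There you claim that letting $\sum_r\partial_\nu\psi_r\nu_r$ vary freely forces $e_re_s=c'\delta_{rs}$ on $S$. In fact the no-splitting matrix does not depend on $\partial_\nu\psi$ at all.

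A quick test shows this. Suppose $\psi|_{\partial\Omega}=0$ but $\partial_\nu\psi\neq0$. Then $\Omega_\psi=\Omega$, $d_\psi=d$, and $T_\psi=\gamma_\psi E_\Omega\gamma_\psi^{-1}$ is conjugate to $T_0$, so nothing can split. Consistently, in \eqref{cond2} every term vanishes:
\begin{itemize}
\item $I_1=0$, since $\psi=0$ on $\partial\Omega$;
\item $L_1(e_r,e_s)=0$, since its integrand vanishes identically for harmonic functions;
\item $I_2=G=0$, since $\div\psi=\sum_r\partial_\nu\psi_r\nu_r$ on $\partial\Omega$.
\end{itemize}
Yet \eqref{eq:splitS1} would give the generally nonzero matrix $\frac{\lambda-1}{\lambda}\int_S e_re_s\sum_r\partial_\nu\psi_r\nu_r\,d\sigma$. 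The culprit is the surface integration by parts. For $\psi$ supported away from $\Gamma$, the correct identity is
\[
\int_S\psi\cdot\nabla f\,d\sigma=-\int_S f\,\div_\tau\psi\,d\sigma+\int_S(\psi\cdot\nu)\,(Hf+\partial_\nu f)\,d\sigma ,\qquad \div_\tau\psi:=\div\psi-\sum_r\partial_\nu\psi_r\nu_r ,
\]
where $H$ is the mean curvature of $\partial\Omega$. It is not $-\int_S f\div\psi\,d\sigma$.

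Redo the computation with $f=e_re_s$, $\partial_\nu(e_re_s)=2(\lambda-1)e_re_s$ on $S$, and $\mu(\lambda-1)=1-\mu$. The $\div_\tau\psi$ terms cancel exactly against $(1-\mu)G(e_r,e_s)$, and the correct condition is
\[
\int_S(\psi\cdot\nu)\Big[-\mu\,\nabla e_r\cdot\nabla e_s+(1-\mu)\big(H+2(\lambda-1)\big)e_re_s\Big]d\sigma=C(\psi)\delta_{rs}.
\]
This agrees with Hadamard's principle that only the normal velocity matters; for a simple eigenvalue it reproduces the classical Steklov shape derivative. Since only $\psi\cdot\nu|_S$ is free, all you can extract is the pointwise identity
\[
\nabla_\tau e_r\cdot\nabla_\tau e_s-(\lambda-1)(\lambda-1+H)\,e_re_s=c(x)\delta_{rs}\quad\text{on } S,
\]
with $\nabla_\tau$ the tangential gradient. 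On $W$ the same computation gives $\nabla_\tau e_r\cdot\nabla_\tau e_s+(H-1)e_re_s=c(x)\delta_{rs}$.

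These are Henry-type conditions of the same kind as \eqref{condHenry}, which the paper itself says it cannot exploit. They do not yield $e_r=\partial_\nu e_r=0$, so Remark~\ref{uniquecont} is never reached. Your first sub-step is affected too: with $\partial_\nu\psi_1=0$ you get the combined identity above, not $\nabla e_r\cdot\nabla e_s=c\,\delta_{rs}$. The construction $\psi_1(x+s\nu)=f(x)+s\,g(x)$ that you single out as the main obstacle is beside the point, since $g$ never enters.

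What is missing is either an argument that such a Henry-type identity forces the eigenfunctions to vanish, or a different mechanism for splitting. Neither your proposal nor the computation it relies on provides one.
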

\begin{proof}
We recall that if $\|\psi\|_{\mathcal{D}}$ is small, the multiplicity of an eigenvalue $\mu^{\Omega_\psi}$ near $\bar{\mu}$ is lesser or equal than the multiplicity of $\bar{\mu}$.
In the previous subsection we have proved that there exists at least a small perturbation, which leaves one of the two components of the boundary unchanged and for which the multiplicity in not maintained. So, the multiplicity decreases under this perturbation.
\end{proof}

The next corollary follows from Proposition \ref{thm:main-tool}, composing
a finite number of perturbations.
\begin{cor}\label{cor:auto-semplice-0}
Let $\bar\lambda$ an eigenvalue for Problem (\ref{Slosh-down}) with multiplicity $m$.  For any $\varepsilon>0$ sufficiently small there exist $\psi \in \mathcal{D}$  with $\|\psi\|_\mathcal{D}<\varepsilon$, compactly supported in $S$, or in $W$,  for which  Problem (\ref{mainpsisimp})  has exactly $m$ simple eigenvalues in a neighborhood of $\bar\lambda$ .

\end{cor}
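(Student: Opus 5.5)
The plan is to iterate Proposition \ref{thm:main-tool}, inducting on the multiplicity $m$ of $\bar\mu=1/\bar\lambda$ as an eigenvalue of $T_0$. Throughout, every perturbation is taken with support meeting $\partial\Omega$ only inside $S$, or only inside $W$; one choice is fixed at the start and kept for all steps.

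First, fix an open bounded interval $U\ni\bar\mu$ with $\bar U\cap\sigma(T_0)=\{\bar\mu\}$. Since $T_\psi$ depends continuously on $\psi$ in operator norm, which follows from the differentiability established in the corollary before (\ref{cond1}), standard perturbation theory for compact self-adjoint operators applies. For $\|\psi\|_{\mathcal D}$ small, $\bar U$ then contains exactly $m$ eigenvalues of $T_\psi$, counted with multiplicity, and none lies on $\partial U$. If $m=1$ there is nothing to prove.

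For $m>1$, Proposition \ref{thm:main-tool} gives $\psi_1$ with $\|\psi_1\|_{\mathcal D}<\varepsilon/2$ such that $\bar U\cap\sigma(T_{\psi_1})=\{\mu_1,\dots,\mu_k\}$ with $k>1$ and each multiplicity $m_t<m$. Choose disjoint small intervals $U_t\ni\mu_t$ inside $U$ isolating each $\mu_t$.

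Next, regard $\Omega_1:=\Omega_{\psi_1}$ as the new base domain, with boundary parts $(I+\psi_1)S$ and $(I+\psi_1)W$. All of Section \ref{sec:sloshsimp} applies verbatim to $\Omega_1$, which is still $C^{1,1}$ because $\psi_1\in C^2$. Hence Proposition \ref{thm:main-tool} can be applied again to each $\mu_t$ with $m_t>1$. Choose the new perturbation $\psi_2$ small enough that three things hold: eigenvalues already simple inside other $U_{t'}$ stay simple and stay inside $U_{t'}$; the eigenvalue count in each $U_t$ is preserved; and the composition $(I+\psi_2)\circ(I+\psi_1)=I+\psi_1+\psi_2\circ(I+\psi_1)$ has $C^2$-norm less than $\varepsilon$. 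The last point is possible by taking $\|\psi_2\|_{C^2}$ small compared to $\varepsilon/2$, controlling the composition by $\|\psi_1\|_{C^2}<1/2$. Because the total multiplicity $m$ strictly drops at each splitting, after finitely many steps (at most $m-1$) we obtain $m$ simple eigenvalues near $\bar\mu$, hence near $\bar\lambda$.

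The main obstacle is preserving the requirement that the composed perturbation leaves $S$ (resp. $W$) unchanged. The image $(I+\psi_1)W$ equals $W$ if $\psi_1$ vanishes near $W$. So I will require each $\psi_j$ to vanish in a neighbourhood of the fixed component, and then the composition vanishes there as well. This is compatible with the construction of the previous subsections: there $\psi$ was arbitrary on the moving component, and only its tangential values and $\partial_\nu\psi$ on that component were needed. Uniform smallness of all intermediate perturbations is handled by choosing each $\varepsilon_j$ recursively, e.g.\ $\varepsilon_j<\varepsilon 2^{-j}$ and below the stability threshold of all eigenvalues already split.
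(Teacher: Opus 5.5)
Your proposal is correct and follows the paper's approach. The paper proves this corollary in one line, by applying Proposition~\ref{thm:main-tool} repeatedly and composing finitely many perturbations, exactly as you do; you only make explicit the bookkeeping the paper leaves implicit, namely the isolating intervals, the recursive smallness, the composition formula $I+\psi_1+\psi_2\circ(I+\psi_1)$, and requiring each $\psi_j$ to vanish near the fixed component so that the composite still fixes it. One wording slip: the total multiplicity $m$ in $U$ stays constant, and what strictly increases at each step is the number of distinct eigenvalues in $U$, which is what gives your bound of at most $m-1$ steps.
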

At this point we are in position to prove the main result of this paper.
\begin{proof}[Proof of Theorem  \ref{thm:main-simp}]
To prove the result, we iterate the procedure of Proposition \ref{thm:main-tool} and Corollary \ref{cor:auto-semplice-0} countably many times.  
Specifically, suppose that  $ \lambda_{q_1}$, for some $q_1\in \mathbb{N}$ is the first multiple eigenvalue, with multiplicity $m_1>1$, for problem (\ref{Slosh-down}) on $\Omega$. Then, by Proposition  \ref{thm:main-tool} and Corollary  \ref{cor:auto-semplice-0}, there exists $\varepsilon_1$ and a perturbation $\psi_1\in\mathcal{D}$ with  $\|\psi_1\|_\mathcal{D}<\varepsilon_1$, such that for Problem (\ref{mainpsisimp}) in $\Omega_{1}:=(I+\psi_1)\Omega$ all the eigenvalues $\lambda^{\Omega_1}_1, \lambda^{\Omega_1}_1\dots,\lambda^{\Omega_1}_{q_1},\dots \lambda^{\Omega_1}_{{q_1}+m_1-1}$ are simple. Let us set $F_1:=(I+\psi_1)$. 

If all the eigenvalues of Problem (\ref{mainpsisimp}) on $\Omega_1$ are simple, the proof is complete. Otherwise, let $ \lambda^{\Omega_1}_{q_2}$, with $q_2\ge q_1+m_1$ be the first multiple eigenvalue with multiplicity $m_2>1$.  In this case, we can find another perturbation $\psi_2$, with $\|\psi_2\|_\mathcal{D}<\varepsilon_2< \varepsilon_1/2$ which splits the eigenvalue  $ \lambda^{\psi_1}_{q_2}$ while leaving the multiplicity of the previous eigenvalues unchanged. More precisely, setting $F_2:=(I+\psi_2)$, we define the new perturbed domain $\Omega_2=F_2(\Omega_1)=F_2\circ F_1 (\Omega)$ for which all the eigenvalues $\lambda^{\Omega_2}_1,\dots, \lambda^{\Omega_2}_{{q_2}+m_2-1}$  for Problem (\ref{mainpsisimp}) in $\Omega_2$ are simple. 

We can iterate this procedure countably many times, yielding a sequence of positive numbers $0<\varepsilon_\ell<    \frac{\varepsilon_\ell}{2^\ell}$, a sequence of perturbations $\psi_\ell\in \mathcal{D}$ with $\|\psi_\ell\|_\mathcal{D}<\varepsilon_\ell$, a sequence of maps $F_\ell:=(I+\psi_\ell)$ and a sequence of domains $\psi_\ell=F_\ell\circ\dots\circ F_1(\Omega)$ for  which all the eigenvalues $\lambda_1^{\Omega_\ell},\dots,\lambda^{\Omega_\ell}_{q_\ell+m_\ell-1}$  for Problem (\ref{mainpsisimp}) in $\Omega_\ell$ are simple. 

To conclude the proof, define $\mathcal{F}_\ell:=F_\ell\circ\dots\circ F_1$. By construction, and due to the choice on the size of $\varepsilon_l$, we have that $\mathcal{F}_\ell\rightarrow \mathcal{F}_\infty$ in $C^2(\mathbb{R}^n,\mathbb{R}^n)$, that $\psi_\infty:=\mathcal{F}_\infty-I$ belongs to $\mathcal{D}$ and that $\|\psi_\infty\|_\mathcal{D}<2\varepsilon_1$. At this point one can easily prove that  the eigenvalues for Problem (\ref{mainpsisimp}) in $\Omega_\infty:=I+\psi_\infty(\Omega)$ are all simple and we conclude the proof. 
\end{proof}

\section{Proof of Theorem \ref{thm:2-simp}}\label{sec:sloshup}
We address equation (\ref{2psisimp}). In this case we use the Hilbert space $H^1(\Omega)$ with the usual scalar product 
\[
a_{\Omega}(u,v):=\int_\Omega \nabla u \nabla v \ dx +\int_{ \Omega} uv\ dx .
\]
which induce the complete norm $\|u\|_{H^1}=\left(a_{\Omega}(u,u)\right)^{1/2}$. By the change of variables $I+\psi$, as before we pull back the bilinear form $a_{\Omega_\psi} $, defined  on $H^1 (\Omega_\psi)$, on $H^1(\Omega)$. In fact, for any $\tilde u,\tilde \varphi\in H^1 (\Omega_\psi)$ we set
\begin{multline}
    a_{\Omega_\psi}(\tilde u,\tilde \varphi)=
   \int_{ \Omega_\psi}\nabla \tilde u\nabla\tilde \varphi\  dx
    +\int_{ \Omega_\psi}\tilde u\tilde \varphi\ dx 
    \\
    =\mathcal{A}_\psi (u,v)
    +\int_{ \Omega} u \varphi J_\psi dx =:\mathcal{E}_\psi (u,\varphi) 
\end{multline}
 where $u=\gamma_{\psi}\tilde u$, $\varphi=\gamma_{\psi} \tilde \varphi$ .   
 As before, for any domain $\Omega$ we define the operator $E_\Omega: H^1(\Omega)\rightarrow H^1(\Omega)$  as the map which associates an element $f\in H^1(\Omega)$ with the unique element in $ H^1(\Omega)$ such that
\begin{equation}
 a_{\Omega}(E_\Omega(f),\varphi)=\int_{\partial \Omega} d(x)f\varphi\  d\sigma 
\end{equation}
holds for all $\varphi \in H^1 (\Omega)$. By the operator $E_\Omega$, problem (\ref{Slosh-up}) is equivalent to find a pair $(e,\lambda)$ for which $e=E_\Omega(\lambda e)$, and an analogous formula holds for  problem (\ref{2psisimp}) on a perturbed domain $\Omega_\psi$.
Finally, we combine the operator $E_{\Omega_\psi}$ with the pushforward and the pullback maps, defining the operator $T_\psi:=\gamma_\psi E_{\Omega_\psi}\gamma_\psi^{-1}$, $T_\psi : H^1(\Omega)\rightarrow H^1(\Omega)$ and we have the fundamental identity 
\begin{equation}
    \mathcal{E}_\psi (T_\psi u,\varphi)=\int_{\partial \Omega} d(x)u\varphi\ B_\psi d\sigma .
\end{equation} 
To find the no splitting condition which holds in this framework, we differentiate the above identity with respect to the $\psi$ variable, at $\psi=0$, obtaining
 
\begin{multline}\label{condthm4}
    a_\Omega  (T_\psi '(0)[\psi] u,\varphi)=
   \int_\Omega\psi_t \frac{\partial}{\partial x_t} \left(\nabla T_0u\nabla \varphi\right) - \psi_i\frac{\partial }{ \partial x_j}\left(\frac{\partial T_0u}{\partial x_i}\frac{\partial \varphi}{\partial x_j}+
\frac{\partial T_0u}{\partial x_j}\frac{\partial \varphi}{\partial x_i}
\right) dx\\ 
-    \int_{\partial\Omega}\psi_t \nu_t \nabla T_0u\nabla \varphi -
\psi_i \nu_j\left(\frac{\partial T_0u}{\partial x_i}\frac{\partial \varphi}{\partial x_j}+\frac{\partial T_0u}{\partial x_j}\frac{\partial \varphi}{\partial x_i}
\right) d\sigma\\
+ \int_\Omega \psi_t \frac{\partial}{\partial x_t}  T_0u \varphi  dx
-\int_{\partial\Omega}\psi_t \nu_t  T_0u \varphi d\sigma
 +\int_{S}  u\varphi \left(\div \psi -\sum_{r=1}^n \partial_\nu \psi_r \nu_r\right)d\sigma  \\
    =-L_1( T_0u, \varphi)-
I_1( T_0u, \varphi)
-I_3(T_0 u, \varphi)+G(u,\varphi),
\end{multline}
thus the no splitting condition is
\begin{equation}\label{nosplithm4}
-\mu L_1( e_r, e_s)-
\mu I_1( e_r, e_s)
-\mu I_3(e_r, e_s)+G(e_r, e_s)=C \delta_{rs}    
\end{equation}
With the same strategy of the previous proof, we choose a perturbation $\psi$ compactly supported in the interior of $\Omega$ and we can absorb the integral on the domain $\Omega$  in the right hand side of (\ref{nosplithm4}), so the no splitting condition becomes
\begin{multline}
-  \mu  \int_{\partial\Omega}\psi_t \nu_t \nabla e_r \nabla e_s + \mu  \int_{\partial\Omega}
\psi_i \left(\frac{\partial e_r}{\partial x_i}\frac{\partial e_s}{\partial \nu}+\frac{\partial e_r}{\partial \nu}\frac{\partial e_s}{\partial x_i}
\right) d\sigma\\
-\mu \int_{\partial\Omega}\psi_t \nu_t  e_re_s d\sigma
 +\int_{S}  e_r e_s \left(\div \psi -\sum_{r=1}^n \partial_\nu \psi_r \nu_r\right)d\sigma  =C \delta_{rs}    
\end{multline}

Since for an eigenvalue $\partial_\nu e_r=d(x)\lambda e_r$, $d=1$ on $S$ and $d=0$ on $W$, we have
\begin{equation*}
    \int_{\partial\Omega}
\psi_i \left(\frac{\partial e_r}{\partial x_i}\frac{\partial e_s}{\partial \nu}+\frac{\partial e_r}{\partial \nu}\frac{\partial e_s}{\partial x_i}
\right) d\sigma=
\lambda \int_{S}
\psi_i  \frac{\partial }{\partial x_i}(e_r e_s) d\sigma=-\lambda \int_{S}e_r e_s
\div\psi d\sigma
\end{equation*} 
and, since $\lambda =1/\mu$, condition (\ref{nosplithm4}) further simplifies to

\begin{equation}\label{nosplitthm4S}
  \mu  \int_{S}\psi_t \nu_t (\nabla e_r \nabla e_s +e_re_s)
 +\int_{S}  e_r e_s\sum_{r=1}^n \partial_\nu \psi_r \nu_rd\sigma  =-C \delta_{rs},    
    \end{equation}
and there exists at least one component $t$, say $t=1$, for which $\nu_t \neq 0$.
Choosing $\psi=(\psi_1,0,\dots,0)$ with $\psi_1$ arbitrarily defined on $S$, we can absorb the first integral in the right hand side of (\ref{nosplitthm4S}), which becomes

\begin{equation*}
    - C(\psi)\delta_{rs}=
     +\mu \int_{ S}  e_r  e_s\sum_{r=1}^n \partial_\nu \psi_r \nu_r d\sigma
\end{equation*}
Since we can choose $\sum_{r=1}^n \partial_\nu \psi_r \nu_r $ as and arbitrary function on $S$, we get that $ e_r  e_s=0$ if $r\neq s$ and $| e_1|^2=\dots=| e_m|^2$ almost everywhere on $S$. This, combined with the boundary condition, gives,  for any $r=1,\dots, m$,
\begin{eqnarray}
    e_r=0;&& \frac{\partial e_r}{\partial \nu}=\lambda e_r=0
\end{eqnarray}
which leads to a contradiction.

\begin{rem*}
    Unlike Theorem \ref{thm:main-simp}, here we are only able to handle perturbations of  $S$. When considering perturbations supported on $W$,  we encounter a technical difficulty. In fact, in this case condition  (\ref{nosplithm4}) becomes
\begin{equation}\label{thm4S}
  \mu  \int_{W}\psi_t \nu_t (\nabla e_r \nabla e_s +e_re_s) =-C \delta_{rs},    
    \end{equation}
Since $\psi$ is arbitrary, and since $\partial_\nu e_r=0$ on $W$, it follows that 
\begin{equation}\label{condHenry}
    \nabla_W e_r\nabla_W e_s =- e_re_s \text{ on }W \text{ for }r\neq s,
\end{equation}
where $\nabla_W$ represents the tangential part of the gradient.  However, at present, we are unable to derive a contradiction from this condition.
\end{rem*}

\section{Problem with coefficients}
The proofs of the theorems in these cases follow a scheme analogous to the case of domain perturbations. Therefore hereafter  we restrict our attention to the variational setting and the resulting no splitting condition, while we refer to the previous sections for the details to achieve a complete proof.

\subsection{No splitting condition for problem (\ref{pot-down})}
Firstly we analyze the case of Problem (\ref{pot-down}). For this problem we call
\begin{equation}
    \label{defApot}
{A} (u,\varphi) :=\int_{\Omega} \nabla u \nabla  \varphi \ dx +\int_{\partial \Omega} a(x) u\varphi d\sigma,   
\end{equation}
which, as $a$ is strictly positive on $\partial \Omega$, is an equivalent norm on $H^1$.
As in the previous sections, we define $E:H^1(\Omega)\rightarrow H^1(\Omega)$ as follows
\begin{equation}\label{eq:Eusimp-down}
 {A} (E(f),\varphi)=\int_{\partial \Omega} d(x)f\varphi\  d\sigma 
\end{equation}
for all $\varphi \in H^1 (\Omega)$, so $e$ is an eigenfunction of Problem (\ref{pot-down})
with eigenvalue $\lambda$, if and only if $e$ is an eigenfunction of $E$ with eigenvalue $\mu:=1/\lambda$.

Finally we define, for $\|b\|_{C^0}$ small, the analogous of ${A} $ and $E$ for the perturbed problem, namely

\begin{equation}
{A}_b(u,\varphi) :=\int_{\Omega} \nabla u \nabla  \varphi \ dx +\int_{\partial \Omega} (a+b)u\varphi,   
\end{equation}
and $E_b$ such that

\begin{equation}\label{eq:idpot}
 {A}_b (E_b(u),\varphi)=\int_{\partial \Omega} d(x)u\varphi\  d\sigma 
\end{equation}

It is easy to see that both $b\mapsto {A}_b$ and $b\mapsto E_b$ are Fr\'echet differentiable in the $b$ variable at $b=0$ and, by (\ref{eq:idpot}) it holds
\begin{equation}
 {A'}_b(0)[b] (E(u),\varphi)+{A} (E'_b(0)[b](u),\varphi)=0 
\end{equation}
Since, by direct computation, $ {A'}_b(0)[b](u,\varphi)=\int_{\partial \Omega} bu\varphi$,  if $\lambda$ is an eigenvalue of Problem (\ref{pot-down}) with multiplicity $m>1$ and $e_i,e_j$ are two elements of the orthonormal base of the eigenspace relative to $\lambda$, we have the no splitting condition
\begin{equation}\label{nosplit-pot-down}
    {A} (E'_b(0)[b](e_i),e_j)=-\mu \int_{\partial \Omega}be_ie_jd\sigma=c\delta_{ij} 
\end{equation}
for all $b$. 

If (\ref{nosplit-pot-down}) is fulfilled for all $b$, then almost everywhere on $\partial \Omega$ $e_i e_j(x)=0$ if $i\neq j$ and $e_1^2(xs)=e_2^2(x)=\dots=e_m^2(x)$. Thus, for all $i$,  $e_i=\partial_\nu e_i=0$ almost everywhere on $\partial \Omega$. This implies $e_i=\nabla e_i=0$  almost everywhere on $\partial \Omega$ and this leads to a contradiction by the unique continuation principle. 
Thus, for all $\varepsilon$, there exists at least a $b(x)$ with $\|b\|_{C^0}<\varepsilon$ such that the no splitting condition  (\ref{nosplit-pot-down}) is not fulfilled. 
\subsection{No splitting condition for problem (\ref{pot-up})}

This case is completely similar, we start defining the equivalent norm on $H^1$ given by 
\begin{equation}
    \label{defApot-d}
{A} (u,\varphi) :=\int_{\Omega} \nabla u \nabla  \varphi \ dx +\int_{\Omega } a(x) u\varphi dx,   
\end{equation}
and $E:H^1(\Omega)\rightarrow H^1(\Omega)$ as
\begin{equation}\label{eq:Eusimp-up}
 {A} (E(f),\varphi)=\int_{\partial \Omega} d(x)f\varphi\  d\sigma 
\end{equation}
for all $\varphi \in H^1 (\Omega)$. Then we set ${A}_b $  and $E_b$ as
\begin{eqnarray}
    {A}_b(u,\varphi) &:=&\int_{\Omega} \nabla u \nabla  \varphi \ dx +\int_{ \Omega} (a+b)u\varphi dx,   \\
    {A}_b (E_b(u),\varphi)&=&\int_{\partial \Omega} d(x)u\varphi\  d\sigma 
\end{eqnarray}
Again, both $b\mapsto {A}_b$ and $b\mapsto E_b$ are Fr\'echet differentiable in the $b$ variable at $b=0$ and, it holds
\begin{equation}
 {A'}_b(0)[b] (E(u),\varphi)+{A} (E'_b(0)[b](u),\varphi)=0 
\end{equation}
Now, if $\lambda$ is an eigenvalue of Problem (\ref{pot-up}) with multiplicity $m>1$ and $e_i,e_j$ are two elements of the orthonormal base of the eigenspace relative to $\lambda$, we obtain the no splitting condition that for Problem (\ref{pot-up}) reads as
\begin{equation}\label{nosplit-pot-up}
    {A} (E'_b(0)[b](e_i),e_j)=-\mu \int_{ \Omega}be_ie_jdx =c\delta_{ij} 
\end{equation}
for all $b$. 
Again, if (\ref{nosplit-pot-up}) is fulfilled for all $b$, then, for all $i$, $e_i=0$ almost everywhere on $\Omega$ and this leads us to a contradiction.

\subsection{Proof of Theorem \ref{thm:pot}}

In the previous paragraph, we showed that, for both Problems (\ref{pot-down}) and (\ref{pot-up}), for any $\varepsilon$, there exists a perturbation $b$, with $\|b\|_{C^0}<\varepsilon$ 
for which lowers the multiplicity of a single eigenvalue. 
By iterating this procedure a finite number of time we show that there exists a $b$ such that a single eigenvalue $\lambda$ splits in $m$ simple eigenvalues. 
We can now conclude, similarly to the other proofs. In fact, suppose that $\lambda^a_{q_1}$ is the first multiple eigenvalue with multiplicity $m_1>1$, there exists an $\varepsilon$ and a $b_1$ with $\|b_1\|_{C^0}<\varepsilon/2$ which splits the eigenvalue $\lambda^a_{q_1}$ in $m_1$ $\lambda^{a+b_1}_{q_1,1}, \lambda^{a+b_1}_{q_1,m_1} $simple eigenvalues.
If there are no more multiple eigenvalues, the proof is complete. 
Otherwise, let  $\lambda^{a+b_1}_{q_2}$ another multiple eigenvalue with multiplicity $m_2>1$. 
Again we can find $b_2$ with $\|b_2\|<\varepsilon/2^2$ and $\lambda^{a+b_1}_{q_2}$ splits in 
$\lambda^{a+b_1+b_2}_{q_2,1}, \lambda^{a+b_1+b_2}_{q_2,m_2} $ simple eigenvalues.

We can iterate this procedure countably many times, choosing a sequence of perturbations $b_l$ 
with $\|b_l\|_{C^0}<\varepsilon/2^l$. By the choice of the norm of these perturbations, 
we have that $b=\sum_{i=1}^{\infty} b_i$ is a perturbation with $\|b\|\le \varepsilon$ 
for which all the eigenvalues are simple.

\subsection{Proof of Theorem \ref{thm:div}}

Being the problem of this theorem completely analogous to the proof of Theorem \ref{thm:pot}, 
we limit ourselves to find the no splitting condition in this setting and to show that there exists at least a $B(x)=(b_{ij}(x))_{ij}$ for which the condition is not verified.
In this case the equivalent norm on $H^1$ is given by 
\begin{equation}
{A} (u,\varphi) :=\int_{\Omega} \sum_{ij} a_{ij}\partial_i u \partial_j  \varphi \ dx +\int_{\partial \Omega } u\varphi d\sigma,   
\end{equation}
and $E$ is given by $ {A} (E(f),\varphi)=\int_{\partial \Omega} d(x)f\varphi\  d\sigma $ for all $\varphi \in H^1 (\Omega)$. 
Again we set ${A}_B $  and $E_B$ as
\begin{eqnarray}
    {A}_B(u,\varphi) &:=&\int_{\Omega} \sum_{ij} (a_{ij}+b_{ij})\partial_i u \partial_j  \varphi \ dx +\int_{\partial \Omega } u\varphi d\sigma  \\
    {A}_B (E_B(u),\varphi)&=&\int_{\partial \Omega} d(x)u\varphi\  d\sigma 
\end{eqnarray}
Differentiating in the $B$ variable at $B=0$ the last equation it holds
\begin{equation}
 {A'}_B(0)[B] (E(u),\varphi)+{A} (E'_B(0)[B](u),\varphi)=0 
\end{equation}
and, if $\lambda$ is an eigenvalue of Problem (\ref{div-down}) with multiplicity $m>1$ and $e_r,e_s$ are two elements of the orthonormal base of the eigenspace relative to $\lambda$, we have the desired no splitting condition that for Problem (\ref{div-down}) 
\begin{equation}\label{nosplit-div-down}
    {A} (E'_B(0)[B](e_r),e_s)=-\mu \int_{ \Omega}
    \sum_{ij}b_{ij} \partial_i e_r \partial_j e_sdx =c\delta_{es} 
\end{equation}
for all $B=(b_{ij})_{ij}$. 
If (\ref{nosplit-div-down}) holds for all $B$, then almost everywhere on $\Omega$, we have that $\partial_i e_r\partial_j e_s=0$ for all $i,j$ and for all $r\neq s$ and that $\partial_i e_1\partial_j e_1=\partial_i e_2\partial_j e_2=\dots =\partial_i e_m\partial_j e_m$   for all $i,j$. 
This implies that $e_1,\dots e_m$ are constant on $\Omega$ and, by the boundary condition, that $e_r=0$ in $\Omega$ for $r=1,\dots, m$, which gives a contradiction.

\appendix

\section{Computation of derivatives}\label{Sec:der}

Hereafter we collect a series of technical Lemma necessary to compute the derivatives of some integral terms, which are needed to prove the results involving domain perturbations.

\begin{lem}\label{lem:Astorto} Set $$
\mathcal{A}_\psi(u,\varphi):
=\int_{\Omega_\psi} \nabla \tilde u\nabla \tilde \varphi d\xi.$$ 
Then, the derivative of $\mathcal{A}$ with respect to $\psi$ at $\psi=0$ satisfies
\begin{eqnarray*}
    \mathcal{A}'_\psi(0)[\psi](u,\varphi)=
&\int_\Omega-\psi_t \frac{\partial}{\partial x_t} \nabla u\nabla \varphi + \psi_i\frac{\partial }{ \partial x_j}\left(\frac{\partial u}{\partial x_i}\frac{\partial \varphi}{\partial x_j}+
\frac{\partial u}{\partial x_j}\frac{\partial \varphi}{\partial x_i}
\right) dx\\
&+\int_{\partial\Omega}\psi_t \nu_t \nabla u\nabla \varphi -
\psi_i \nu_j\left(\frac{\partial u}{\partial x_i}\frac{\partial \varphi}{\partial x_j}+\frac{\partial u}{\partial x_j}\frac{\partial \varphi}{\partial x_i}
\right) d\sigma
\end{eqnarray*} 
\end{lem}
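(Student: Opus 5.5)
We will obtain the formula by pulling back the integral to the fixed domain $\Omega$ and differentiating under the integral sign, followed by an integration by parts that yields the boundary term. With $\xi=x+\psi(x)$ and $\tilde u=u\circ(I+\psi)^{-1}$, the chain rule gives $\nabla_\xi\tilde u(\xi)=(I+D\psi(x))^{-T}\nabla u(x)$. Hence
\[
\mathcal{A}_\psi(u,\varphi)=\int_\Omega \big\langle (I+D\psi)^{-T}\nabla u,(I+D\psi)^{-T}\nabla\varphi\big\rangle\, \det(I+D\psi)\,dx .
\]
For $\|\psi\|_{C^2}<1/2$ the integrand depends smoothly (in fact analytically) on $D\psi$, with bounds uniform in $x$. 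Since $u,\varphi\in H^1(\Omega)$, dominated convergence shows that $\psi\mapsto\mathcal{A}_\psi(u,\varphi)$ is Fr\'echet differentiable at $0$, and the derivative can be computed pointwise in the integrand.

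We then linearize. Using $(I+D\psi)^{-1}=I-D\psi+O(|D\psi|^2)$ and $\det(I+D\psi)=1+\div\psi+O(|D\psi|^2)$, the first-order term is
\[
\mathcal{A}'_\psi(0)[\psi](u,\varphi)=\int_\Omega \div\psi\,\nabla u\nabla\varphi-\frac{\partial\psi_i}{\partial x_j}\Big(\frac{\partial u}{\partial x_i}\frac{\partial \varphi}{\partial x_j}+\frac{\partial u}{\partial x_j}\frac{\partial \varphi}{\partial x_i}\Big)dx .
\]
The index placement in the second term has to be checked carefully: $(D\psi)_{ij}=\partial_j\psi_i$, and the transpose gives the symmetric combination.

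Next we integrate by parts to move the derivatives off $\psi$. For the first term, $\int_\Omega \partial_t\psi_t\,(\nabla u\nabla\varphi)=-\int_\Omega\psi_t\,\partial_t(\nabla u\nabla\varphi)+\int_{\partial\Omega}\psi_t\nu_t\nabla u\nabla\varphi\,d\sigma$. For the second term, $-\int_\Omega\partial_j\psi_i\,M_{ij}=\int_\Omega\psi_i\,\partial_jM_{ij}-\int_{\partial\Omega}\psi_i\nu_jM_{ij}\,d\sigma$, where $M_{ij}=\partial_iu\,\partial_j\varphi+\partial_ju\,\partial_i\varphi$. Adding the two reproduces exactly the interior and boundary expressions in the statement.

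The main obstacle is regularity. The integration by parts requires $\nabla u\nabla\varphi\in W^{1,1}$, which fails for general $H^1$ functions. The cleanest route is to state the first formula, with $\partial\psi$ on $\Omega$, as the derivative valid on all of $H^1$. The integrated-by-parts form is then justified for $u,\varphi\in H^2(\Omega)$, which covers the eigenfunctions used later via the $C^{1,1}$ regularity recalled in Remark \ref{uniquecont}. Alternatively, one can interpret it by density, since both sides agree on smooth functions and the boundary traces of $\nabla u$ make sense for $H^2$ functions.
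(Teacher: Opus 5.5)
Your proposal is correct and is essentially the paper's proof: pull $\mathcal{A}_\psi$ back to $\Omega$ by the chain rule, linearize the inverse Jacobian and $\det(I+D\psi)$ to get $\int_\Omega \mathrm{div}\,\psi\,\nabla u\nabla\varphi-\partial_j\psi_i\,M_{ij}\,dx$, then integrate by parts. The paper does the same computation with $\chi=(I+\psi)^{-1}-I$ and $\partial_{\xi_j}\chi_i\approx-\partial_{x_j}\psi_i$, and your $(I+D\psi)^{-T}$ formulation is cleaner. Your regularity caveat is a point the paper skips, but eigenfunctions are in general not in $H^2(\Omega)$: the jump of $d$ makes $\partial_\nu e$ discontinuous across $S\cap W$ wherever $e\neq 0$ there, which rules out an $H^{1/2}$ trace of $\nabla e$. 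So for eigenfunctions the integrated form should be used with $\psi$ supported away from the interface, or justified by a limiting argument.
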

\begin{proof}
Observe that
\begin{displaymath}
    \tilde u(\xi)=(\gamma_\psi^{-1} u)(\xi)=(\gamma_\chi u)(\xi) =u(\xi +\chi \xi), 
\end{displaymath}
so 
$$
\frac{\partial }{\partial \xi_j}\tilde u=\frac{\partial }{\partial \xi_j}u(\xi +\chi \xi)=
\frac{\partial u}{\partial x_i}
\frac{\partial (\xi +\chi \xi)_i}{ \partial \xi_j}
=\frac{\partial u}{\partial x_j}+
\frac{\partial u}{\partial x_i}\frac{\partial \chi_i}{ \partial \xi_j}.
$$
At this point (with the Einstein convention on repeated indexes)
\begin{align*}
\mathcal{A}_\psi(u,\varphi):
=&
\int_{\Omega_\psi} \frac{\partial \tilde u }{\partial \xi_j}\frac{\partial \tilde \varphi }{\partial \xi_j}d\xi
=\int_{\Omega_\psi} 
\left(
\frac{\partial u}{\partial x_j}+
\frac{\partial u}{\partial x_i}\frac{\partial \chi_i}{ \partial \xi_j}
\right)
\left(
\frac{\partial \varphi}{\partial x_j}+
\frac{\partial \varphi}{\partial x_s}\frac{\partial \chi_s}{ \partial \xi_j}
\right)\\
=&\int_\Omega \nabla u \nabla \varphi |J_\psi| dx+
\int_\Omega \frac{\partial u}{\partial x_j}
\frac{\partial \varphi}{\partial x_s}\frac{\partial \chi_s}{ \partial \xi_j}
|J_\psi| dx
+\int_\Omega \frac{\partial u}{\partial x_i}\frac{\partial \chi_i}{ \partial \xi_j}\frac{\partial \varphi}{\partial x_j}|J_\psi| dx\\
&+\int_\Omega \frac{\partial u}{\partial x_i}\frac{\partial \chi_i}{ \partial \xi_j}
\frac{\partial \varphi}{\partial x_s}\frac{\partial \chi_s}{ \partial \xi_j}
|J_\psi| dx\\
=&\int_\Omega \nabla u \nabla \varphi |J_\psi| dx+
\int_\Omega 
\left(\frac{\partial u}{\partial x_i}\frac{\partial \varphi}{\partial x_j}+
\frac{\partial u}{\partial x_j}\frac{\partial \varphi}{\partial x_i}
\right)\frac{\partial \chi_i}{ \partial \xi_j}
|J_\psi| dx\\
&+\int_\Omega \frac{\partial u}{\partial x_i}\frac{\partial \chi_i}{ \partial \xi_j}
\frac{\partial \varphi}{\partial x_s}\frac{\partial \chi_s}{ \partial \xi_j}
|J_\psi| dx
\end{align*}
Since $\|\psi\|_{C^2}<1/2$, we can express $I+\chi=(I+\psi)^{-1}=\sum_{t=0}^\infty (-1)^t(\psi)^t$, then, taking the linear part of the sum, we have the first derivatives of $\chi$, that is $\frac {\partial \chi_i}{\partial \xi_j} =-\frac {\partial \psi_i}{\partial x_j}$.
Also, by elementary computations
\begin{equation}
    \label{derjacobiano}
|\mathrm{det}J_{I+\varepsilon \psi}|=1+\varepsilon \div\psi +o(\varepsilon)
\end{equation}
so $J_\psi'(0)[\psi])=\div\psi$ . With these equalities in mind, we can compute the derivatives of $\mathcal{A}_\psi(u,\varphi)$. In fact we have
\begin{eqnarray*}
    \mathcal{A}'_\psi(0)[\psi](u,\varphi)=&\int_\Omega\div\psi \nabla u\nabla \varphi - \left(\frac{\partial u}{\partial x_i}\frac{\partial \varphi}{\partial x_j}+
\frac{\partial u}{\partial x_j}\frac{\partial \varphi}{\partial x_i}
\right)\frac{\partial \psi_i}{ \partial x_j}dx
\end{eqnarray*} 
and integration by parts yields the desired result.
\end{proof}
\begin{rem*}
    Notice that, when $\varphi$ is compactly supported in $\Omega$, then we simply have 
\begin{equation} \label{suppcomp}
\mathcal{A}'_\psi(0)[\psi](u,\varphi)=
\int_\Omega-\psi_t \frac{\partial}{\partial x_t} \nabla u\nabla \varphi + \psi_i\frac{\partial }{ \partial x_j}\left(\frac{\partial u}{\partial x_i}\frac{\partial \varphi}{\partial x_j}+
\frac{\partial u}{\partial x_j}\frac{\partial \varphi}{\partial x_i}
\right) dx
\end{equation}
    
\end{rem*}
\begin{lem}\label{lem:B}
    Let $B_\psi$ denote the Jacobian of change of variables on $\partial \Omega$ induced by $\psi$. We have 
$$
\left. \frac{\partial}{\partial \psi}\int_{\partial \Omega} u\varphi\ B_\psi d\sigma\right|_{\psi=0}=\int_{\partial \Omega} u\varphi 
\left(
\div \psi -\sum_{r=1}^n 
\partial_\nu \psi_r \nu_r\right)d\sigma
 $$
and
$$
\left. \frac{\partial}{\partial \psi}\int_{\partial \Omega}d(x)  u\varphi\ B_\psi d\sigma\right|_{\psi=0}=\int_{\partial \Omega} d(x)u\varphi
     \left(\div \psi -\sum_{r=1}^n \partial_\nu \psi_r \nu_r\right)d\sigma    
$$where $d:=1_S$
\end{lem}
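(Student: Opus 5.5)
The plan is to compute the first variation of the boundary area element along the family $I+\varepsilon\psi$, and then simply multiply by $u\varphi$ (respectively $d\,u\varphi$). The functions $u,\varphi$ live on the fixed boundary $\partial\Omega$, and $d_\psi(\xi)=d(x)$ by definition. Since $\int_{\partial\Omega_\psi}\tilde u\tilde\varphi\,d\sigma_\xi=\int_{\partial\Omega}u\varphi\,B_\psi\,d\sigma_x$, the integrand $u\varphi$ (or $d u\varphi$) does not depend on $\psi$. So everything reduces to showing
\[
B'_\psi(0)[\psi]=\div\psi-\sum_{r,s}\nu_r\,\partial_s\psi_r\,\nu_s=\div\psi-\sum_r \partial_\nu\psi_r\,\nu_r
\]
pointwise on $\partial\Omega$. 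The second formula then follows verbatim from the first by inserting the bounded factor $d=\mathbbm 1_S$ (restricting the integral to $S$). Differentiating under the integral sign is justified because $B_\psi$ depends smoothly (polynomially, through $D\psi$) on $\psi\in C^2$, uniformly on $\partial\Omega$.

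To compute $B_\psi$ I would use the standard formula for the tangential Jacobian. If $F=I+\psi$ with $DF=I+D\psi$, then on $\partial\Omega$
\[
B_\psi=|\det DF|\,\bigl|(DF)^{-T}\nu\bigr|.
\]
This comes from Nanson's formula $\tilde\nu\,d\sigma_\xi=\det(DF)\,(DF)^{-T}\nu\,d\sigma_x$. Alternatively, one can take a local orthonormal tangent frame $\tau_1,\dots,\tau_{n-1}$ at $x$ and write $B_\psi=\sqrt{\det\bigl(\langle DF\tau_\alpha,DF\tau_\beta\rangle\bigr)_{\alpha\beta}}$. The $C^{1,1}$ regularity of $\partial\Omega$ guarantees that $\nu$ and the surface measure are defined a.e., which is all that is needed.

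Next comes the linearization. Replace $\psi$ by $\varepsilon\psi$. By (\ref{derjacobiano}), $\det DF=1+\varepsilon\div\psi+o(\varepsilon)$. Moreover $(DF)^{-T}=I-\varepsilon (D\psi)^T+o(\varepsilon)$, so
\[
\bigl|(DF)^{-T}\nu\bigr|^2=1-2\varepsilon\,\langle (D\psi)^T\nu,\nu\rangle+o(\varepsilon).
\]
Hence $\bigl|(DF)^{-T}\nu\bigr|=1-\varepsilon\sum_{r,s}\nu_r\partial_s\psi_r\nu_s+o(\varepsilon)$. Multiplying the two expansions gives $B_{\varepsilon\psi}=1+\varepsilon\bigl(\div\psi-\sum_r\partial_\nu\psi_r\,\nu_r\bigr)+o(\varepsilon)$, uniformly on $\partial\Omega$, which is the claimed derivative. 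As a cross-check via the frame formula, the linear term is $\sum_\alpha\langle\tau_\alpha,D\psi\,\tau_\alpha\rangle$, i.e.\ the tangential divergence $\div\psi-\langle D\psi\,\nu,\nu\rangle$. This is the same quantity, since $\langle D\psi\,\nu,\nu\rangle=\sum_r\nu_r\partial_\nu\psi_r$.

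The main obstacle is bookkeeping rather than analysis: one must make sure the transpose and the index convention in $\langle (D\psi)^T\nu,\nu\rangle$ indeed produce $\sum_r\partial_\nu\psi_r\,\nu_r$, with $\partial_\nu$ acting on the components $\psi_r$. One must also justify the $o(\varepsilon)$ remainder uniformly in $x\in\partial\Omega$ so that it passes under the integral, with $u\varphi\in L^1(\partial\Omega)$ by the trace theorem. Both follow from $\|\psi\|_{C^2}<1/2$, which keeps $DF$ uniformly invertible and makes $B_\psi$ a smooth function of $D\psi$.
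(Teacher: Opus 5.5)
Your proof is correct, but it takes a different route from the paper.

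\textbf{The paper's route.} The paper never computes $B_\psi$. It extends the unit normal $\nu_{\Omega_\psi}$ into $\Omega_\psi$ and uses the divergence theorem to rewrite $\int_{\partial\Omega_\psi}\tilde u\tilde\varphi\,d\sigma$ as $\int_{\Omega_\psi}\div(\tilde u\tilde\varphi\,\nu_{\Omega_\psi})\,d\xi$. It pulls this volume integral back to $\Omega$ with the ingredients of Lemma~\ref{lem:Astorto}: $\partial\chi_i/\partial\xi_j=-\partial\psi_i/\partial x_j$ to first order, and $J'_\psi(0)[\psi]=\div\psi$. It then integrates by parts, the second derivatives of $\psi$ cancel, and only boundary terms remain.

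\textbf{Your route.} You linearize the tangential Jacobian pointwise through Nanson's formula, $B_\psi=|\det DF|\,|(DF)^{-T}\nu|=|\mathrm{cof}(DF)\,\nu|$. Your expansion and index conventions check out: the first-order term is the tangential divergence $\div\psi-\langle D\psi\,\nu,\nu\rangle$.

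\textbf{What your route buys.} You get a pointwise identity, uniform on $\partial\Omega$, which has three advantages.
\begin{itemize}
\item It needs only $\psi\in C^1$ and no extension of the normal.
\item It makes the weighted formula with the discontinuous $d=\mathbbm{1}_S$ immediate; the paper's volume argument is written out only for the unweighted integral.
\item It avoids a point the paper leaves implicit. $F=\gamma_\psi\tilde F$ itself depends on $\psi$ through $\nu_{\Omega_\psi}\circ(I+\psi)$, which produces an extra term $\int_{\partial\Omega}u\varphi\,\nu\cdot\frac{d}{d\varepsilon}\bigl[\nu_{\Omega_{\varepsilon\psi}}\circ(I+\varepsilon\psi)\bigr]_{\varepsilon=0}\,d\sigma$. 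This term vanishes only because that vector field has unit length on $\partial\Omega$, so its derivative is orthogonal to $\nu$.
\end{itemize}

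\textbf{What the paper's route buys.} It does not import the surface-Jacobian formula and stays within the volume change-of-variables machinery already set up for Lemma~\ref{lem:Astorto}.

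One small wording fix: $B_\psi$ is not polynomial in $D\psi$. It is the Euclidean norm of the polynomial vector $\mathrm{cof}(I+D\psi)\,\nu$, so it is smooth for $D\psi$ small, and smoothness is all your differentiation under the integral sign uses.
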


\begin{proof}
Given $\tilde u, \tilde \varphi\in H^1(\Omega_\psi)$, and set $\nu_{\Omega_\psi}$ the outward normal to $\partial \Omega_\psi$, we extend $\nu_{\Omega_\psi}$ smoothly to the whole $\Omega_\psi$, and we define the vectorial function $\tilde F:\Omega_\psi\rightarrow \mathbb{R}^n$ as $\tilde F:=\tilde u\tilde \varphi \nu_{\Omega_\psi}$.  Now, by the divergence theorem we have the identity
\begin{equation}\label{eq:derbordo0}
    \int_{\Omega_\psi}\frac{\partial}{\partial \xi_i}\tilde F_i d\xi=\int_{\partial \Omega_\psi}\tilde F_i(\nu_{\Omega_\psi})_i d\sigma =
    \int_{\partial \Omega_\psi}\tilde u(\xi) \tilde \varphi (\xi) d\sigma 
\end{equation}
By the usual change of variables we have $$\int_{\partial \Omega_\psi}\tilde u(\xi) \tilde \varphi (\xi) d\sigma= \int_{\partial \Omega} u(x)  \varphi (x) B_\psi d\sigma $$
Thus, differentiating in the $\psi$ variable at $\psi=0$ by  (\ref{eq:derbordo0})  we have 
\begin{equation}\label{eq:derbordo1}
    \left. \frac{\partial}{\partial \psi}\int_{\partial \Omega} u\varphi\ B_\psi d\sigma\right|_{\psi=0}=\left. \frac{\partial}{\partial \psi}
      \int_{\Omega_\psi}\frac{\partial}{\partial \xi_i}\tilde F_i d\xi
    \right|_{\psi=0}.
\end{equation}
Now, calling $F=\gamma_\psi \tilde F$, so $\tilde F (\xi)= \gamma_\psi^{-1} F(\xi)=\gamma_\chi F(\xi)=F(\xi +\chi \xi)$ and proceeding as in Lemma \ref{lem:Astorto}, we obtain 
\begin{equation}\label{eq:derbordo2}
 \int_{\Omega_\psi}\frac{\partial}{\partial \xi_i}\tilde F_i d\xi=
  \int_{\Omega_\psi}\frac{\partial F_i}{\partial x_i}  +\frac{\partial  F_i }{\partial x_t}\frac{\partial \chi_t}{\partial x_i} d\xi=
   \int_{\Omega}\frac{\partial F_i}{\partial x_i}  -\frac{\partial  F_i }{\partial x_t}\frac{\partial \psi_t}{\partial x_i} J_\psi dx
\end{equation}
Combining (\ref{eq:derbordo1}) and (\ref{eq:derbordo2}), and integrating by parts we get
\begin{multline}
     \left. \frac{\partial}{\partial \psi}\int_{\partial \Omega} u\varphi\ B_\psi d\sigma\right|_{\psi=0}\\=
      \left. \frac{\partial}{\partial \psi}
       \int_{\Omega}
       \left(\frac{\partial F_i}{\partial x_i}  -\frac{\partial  F_i }{\partial x_t}\frac{\partial \psi_t}{\partial x_i}\right) J_\psi dx\textbf{}  
        \right|_{\psi=0}
        =\int_\Omega \frac{\partial F_i}{\partial x_i}\div \psi dx-
        \int_\Omega \frac{\partial  F_i }{\partial x_t}\frac{\partial \psi_t}{\partial x_i}dx\\
        =
        -\int_\Omega F_i   \frac{\partial^2 \psi_t}{\partial x_i\partial x_t}dx
        +\int_{\partial\Omega} F_i\nu_i \div \psi d\sigma
        +\int_\Omega  F_i \frac{\partial^2 \psi_t}{ \partial x_t\partial x_i}dx
      -  \int_{\partial \Omega } F_i \nu_t\frac{\partial \psi_t}{\partial x_i}d\sigma\\
      =\int_{\partial\Omega} u\varphi \div \psi d\sigma
      -  \int_{\partial \Omega } u\varphi \nu_i \nu_t\frac{\partial \psi_t}{\partial x_i}d\sigma
\end{multline}
and, since $\nu_i \frac{\partial \psi_t}{\partial x_i}=\partial_\nu \psi_t$, we get the proof.

\end{proof} 
Finally, similarly to the proof of Lemma \ref{lem:Astorto}, we get the last result of this section. 
\begin{lem}\label{lem:uv}
 We have 
\begin{multline*}
\left. \frac{\partial}{\partial \psi}\int_{ \Omega_\psi} \tilde u\tilde \varphi d\xi\right|_{\psi=0}=
\left. \frac{\partial}{\partial \psi}\int_{ \Omega} u\varphi |J_\psi|dx\right|_{\psi=0}
\\=\int_\Omega-\psi_t \frac{\partial}{\partial x_t}  u \varphi  dx
+\int_{\partial\Omega}\psi_t \nu_t  u \varphi d\sigma
 \end{multline*}
\end{lem}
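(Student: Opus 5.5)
The plan is to reduce everything to the derivative of the Jacobian, which has already been computed in the proof of Lemma \ref{lem:Astorto}, and then integrate by parts once.

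First I change variables $\xi=x+\psi(x)$ in $\int_{\Omega_\psi}\tilde u\tilde\varphi\,d\xi$. By definition of the pull-back $\gamma_\psi$, we have $\tilde u(x+\psi(x))=u(x)$ and $\tilde\varphi(x+\psi(x))=\varphi(x)$. Hence
\[
\int_{\Omega_\psi}\tilde u\tilde\varphi\,d\xi=\int_\Omega u\varphi\,|J_\psi|\,dx ,
\]
which is the first equality. The key point is that $u,\varphi\in H^1(\Omega)$ are now fixed, and the whole dependence on $\psi$ sits in the scalar factor $|J_\psi|=|\det(I+D\psi)|$. Unlike Lemma \ref{lem:Astorto}, no derivatives of $\chi$ enter.

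Second, I differentiate in $\psi$ at $\psi=0$. Since $\|\psi\|_{C^2}<1/2$, the matrix $I+D\psi$ has positive determinant, and $\det(I+D\psi)$ is a polynomial in the entries of $D\psi$. Expanding it gives
\[
|J_\psi|=1+\div\psi+R(\psi),\qquad \|R(\psi)\|_{C^0}\le C\|\psi\|_{C^1}^2 .
\]
This is exactly (\ref{derjacobiano}), now in a uniform (Fréchet) form. Because $u\varphi\in L^1(\Omega)$, the remainder term satisfies $\left|\int_\Omega u\varphi R(\psi)\,dx\right|\le C\|u\varphi\|_{L^1}\|\psi\|_{C^1}^2=o(\|\psi\|_{C^2})$. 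So the Fréchet derivative is the linear functional $\psi\mapsto\int_\Omega u\varphi\,\div\psi\,dx$.

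Third, I integrate by parts in each term $\int_\Omega u\varphi\,\partial_t\psi_t\,dx$. This gives
\[
\int_\Omega u\varphi\,\div\psi\,dx=-\int_\Omega\psi_t\,\frac{\partial}{\partial x_t}(u\varphi)\,dx+\int_{\partial\Omega}\psi_t\nu_t\,u\varphi\,d\sigma ,
\]
which is the claimed formula, with $\frac{\partial}{\partial x_t}u\varphi$ read as the derivative of the product $u\varphi$.

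The only step needing some care, and the one I expect to be the main obstacle, is the justification of this integration by parts. Here $u\varphi\in W^{1,1}(\Omega)$, with $\partial_t(u\varphi)=\varphi\,\partial_t u+u\,\partial_t\varphi$, and its trace on $\partial\Omega$ equals the product of the $H^{1/2}$ traces. I would first prove the identity for $u,\varphi\in C^1(\bar\Omega)$, where it is the divergence theorem on the $C^{1,1}$ domain $\Omega$. I would then pass to the limit by density of $C^1(\bar\Omega)$ in $H^1(\Omega)$, using three facts: continuity of the trace $H^1(\Omega)\to L^2(\partial\Omega)$, the bound $\|u\varphi\|_{W^{1,1}}\le C\|u\|_{H^1}\|\varphi\|_{H^1}$, and the boundedness of $\psi$ and $\div\psi$. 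With this, both sides are continuous bilinear forms on $H^1(\Omega)$, and the identity extends from smooth functions to all of $H^1(\Omega)$.
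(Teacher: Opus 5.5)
Your proposal is correct and follows the paper's route. The paper only says the proof goes ``similarly to the proof of Lemma \ref{lem:Astorto}'': change variables to get $\int_\Omega u\varphi|J_\psi|\,dx$, use $J_\psi'(0)[\psi]=\div\psi$ from (\ref{derjacobiano}), and integrate by parts. Your remainder estimate for the Fr\'echet derivative and your density argument justifying the integration by parts for $u,\varphi\in H^1(\Omega)$ fill in details the paper leaves implicit, and both are sound.
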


\bibliography{references}
\bibliographystyle{abbrv}

\end{document}